\DeclareMathOperator{\vol}{vol}
\DeclareMathOperator{\GL}{GL}
\theoremstyle{plain}
\newtheorem{theo}{Theorem}[section]
\newtheorem{lemma}[theo]{Lemma}
\title[Adelic approximation on spheres]{Adelic approximation on spheres} 
\author{{\'E}ric Gaudron}
\thanks{The author is supported by the ANR-23-CE40-0006-01 Gaec project. With the aim of its open access publication, he applies a CC BY open access license to this manuscript.}
\date{\DTMnow}
\begin{document}

\footnotetext{MSC~$2020$: 11R56, 11E12, 11J13.}
\footnotetext{\textbf{Keywords}: Diophantine approximation, quadratic form, approximation on sphere, rigid adelic space, quadratic Siegel's lemma.} 

\selectlanguage{english}

\begin{abstract} 
We establish an adelic version of Dirichlet's approximation theorem on spheres. Let $K$ be a number field, $E$ be a rigid adelic space over $K$ and $q\colon E\to K$ be a quadratic form. Let $v$ be a place of $K$ and $\alpha\in E\otimes_{K}K_{v}$ such that $q(\alpha)=1$. We produce an explicit constant $c$ having the following property. If there exists $x\in E$ such that $q(x)=1$ then, for any $T>c$, there exists $(\upupsilon,\upphi)\in E\times K$, with $\max{(\Vert\upupsilon\Vert_{E,v},\vert\upphi\vert_{v})}\le T$ and $\max{(\Vert\upupsilon\Vert_{E,w},\vert\upphi\vert_{w})}$ controlled for any place $w$, satisfying $q(\upupsilon)=\upphi^{2}\ne 0$ and $\vert q(\alpha\upphi-\upupsilon)\vert_{v}\le c\vert\upphi\vert_{v}/T$. This remains true for some infinite algebraic extensions as well as for a compact set of places of $K$. Our statements generalize and improve on earlier results by Kleinbock \& Merrill (2015) and Moshchevitin (2017). The proofs rely on the quadratic Siegel's lemma in a rigid adelic space obtained by the author and R{\'e}mond (2017).
\end{abstract}
 
\maketitle

\section{Introduction}
Let $n\ge 1$ be an integer and $q\colon\mathbb{R}^{n}\to\mathbb{R}$ be a positive-definite quadratic form. The Euclidean variant of Dirichlet's approximation theorem asserts that, for any $\alpha\in\mathbb{R}^{n}$ and any real number $T>0$, there exists $(\upupsilon,\upphi)\in(\mathbb{Z}^{n}\times\mathbb{Z})\setminus\{0\}$ such that \[0\le\upphi\le T\quad\text{and}\quad q\left(\alpha\upphi-\upupsilon\right)\le\frac{n(\det q)^{1/n}}{T^{2/n}},\]where $\det q$  is the determinant of the symmetric matrix $A(q)$ associated to the quadratic form $q$ (in the canonical basis of $\mathbb{R}^{n}$). Its proof consists in applying Minkowski's theorem to the lattice $\mathbb{Z}^{n}\times\mathbb{Z}$ endowed with the Euclidean structure $q(\alpha\upphi-\upupsilon)+a\upphi^{2}$ for a well-chosen positive real number $a$. In 2015, Kleinbock and Merrill published a similar statement in the particular case $q(x)=x_{1}^{2}+\cdots+x_{n}^{2}$ but with the additional property $q(\upupsilon)=\upphi^{2}$ satisfied by the solution $(\upupsilon,\upphi)$~\cite{km}. In their result $q(\alpha\upphi-\upupsilon)$ is bounded by $c(n)\upphi/T$ for some positive constant $c(n)$. A generalization to any  positive-definite  quadratic form such that $A(q)\in\mathrm{M}_{n}(\mathbb{Z})$ has been achieved by Moshchevitin~\cite{mo} who, besides, gave an explicit constant depending on $n$ and $\det q$. \par The aim of this article is to improve these constants while simplifying the proofs and providing an adelic generalization. Our first result makes use of the Hermite constant $\gamma_{n}$ in dimension $n$ which is the greatest first minimum of unimodular lattices in the Euclidean space $\mathbb{R}^{n}$. 
 \begin{theo}\label{thm1}
Let $q\colon\mathbb{R}^{n}\to\mathbb{R}$ be a positive-definite quadratic form such that $A(q)$ has integral coefficients. Assume that there exists $x\in\mathbb{Q}^{n}$ such that $q(x)=1$. Then, for all $\alpha\in\mathbb{R}^{n}$ such that $q(\alpha)=1$, for all real number $T\ge (2\gamma_{n})^{n/2}\sqrt{\det q}$, there exist $\upupsilon\in\mathbb{Z}^{n}$ and $\upphi\in \mathbb{Z}$ with $1\le\upphi\le T$ satisfying\[q\left(\frac{\upupsilon}{\upphi}\right)=1\quad\text{and}\quad q\left(\alpha-\frac{\upupsilon}{\upphi}\right)\le\frac{\sqrt{8}(2\gamma_{n})^{n}\det q}{\upphi T}.\]\end{theo}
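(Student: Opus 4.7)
The plan is to recast the statement as a short-vector problem for an isotropic quadratic form, and then apply the adelic quadratic Siegel's lemma of Gaudron and R\'emond (2017). Introduce the $(n+1)$-dimensional $\mathbb{Q}$-vector space $\widetilde E := \mathbb{Q}^n\oplus\mathbb{Q}$ with quadratic form $Q(\upupsilon,\upphi) := q(\upupsilon)-\upphi^2$. The hypothesis that $q$ represents $1$ over $\mathbb{Q}$ supplies the non-zero $Q$-isotropic vector $(x,1)$, so $Q$ is isotropic over $\mathbb{Q}$. Any admissible solution $(\upupsilon,\upphi)$ of our approximation problem is precisely a non-zero $Q$-isotropic integer vector $(\upupsilon,\upphi)\in\mathbb{Z}^{n+1}$; non-vanishing of $\upphi$ is automatic, since $q(\upupsilon) = 0$ would force $\upupsilon = 0$ by positive-definiteness of $q$.

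I would endow $\widetilde E$ with a rigid adelic structure that couples the two constraints (approximation to $\alpha$, and size of $\upphi$) into a single archimedean bound: at each finite prime $p$, the standard sup-norm on $\mathbb{Z}_p^{n+1}$; at the real place, the positive-definite Euclidean norm
\[\|(\upupsilon,\upphi)\|_\infty^2 := T^2\, q(\upupsilon-\upphi\alpha) + \upphi^2.\]
A block-determinant computation using $q(\alpha)=1$ gives Gram determinant $T^{2n}\det q$, and consequently covolume $T^n\sqrt{\det q}$ for $\mathbb{Z}^{n+1}$ in this norm. Applying the Gaudron-R\'emond quadratic Siegel lemma then produces a non-zero $Q$-isotropic $(\upupsilon,\upphi)\in\mathbb{Z}^{n+1}$ whose archimedean norm is bounded by a constant of the form $(2\gamma_n)^n\det q$; the Hermite constant $\gamma_n$ (rather than $\gamma_{n+1}$) arises because the Witt-index-one splitting $Q\cong H\perp Q'$ reduces the isotropic minimum to an ordinary Minkowski minimum on an $n$-dimensional orthogonal complement.

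From this norm bound I would extract $\upphi^2\leq (2\gamma_n)^n\det q$, which together with the standing hypothesis $T\geq (2\gamma_n)^{n/2}\sqrt{\det q}$ gives $\upphi\leq T$, and $T^2\, q(\upupsilon-\upphi\alpha)\leq (2\gamma_n)^n\det q$. To upgrade the latter to the sharper $O(\upphi/T)$ form, I would exploit the isotropy identity
\[q(\upupsilon-\upphi\alpha) = 2\upphi\bigl(\upphi-b_q(\upupsilon,\alpha)\bigr) = -2\upphi\, b_q(\upupsilon-\upphi\alpha,\alpha),\]
obtained by expanding using $q(\upupsilon)=\upphi^2$ and $q(\alpha)=1$, combined with the Cauchy-Schwarz inequality $|b_q(\upupsilon-\upphi\alpha,\alpha)|\leq\sqrt{q(\upupsilon-\upphi\alpha)}$. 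This self-improves the defect: substituting the Siegel-derived bound for $\sqrt{q(\upupsilon-\upphi\alpha)}$ on the right-hand side gives $q(\upupsilon-\upphi\alpha)\leq 2\upphi\sqrt{(2\gamma_n)^n\det q}/T$, and dividing by $\upphi^2$ yields the claimed estimate on $q(\alpha-\upupsilon/\upphi)$ up to the absolute constant.

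The main technical hurdle is the precise invocation of Gaudron-R\'emond's quadratic Siegel lemma with the sharp constant claimed. This requires checking that the local heights at all finite places are trivial under the chosen integral structure (which follows from the integrality of $A(q)$ and the use of $\mathbb{Z}_p^{n+1}$ as local lattice), so that only the archimedean term contributes, and matching the normalization of the isotropic Siegel bound to the Gram determinant computed above. Once this input is in place, the remainder---the block-determinant calculation, the extraction of $\upphi\leq T$, and the Cauchy-Schwarz bootstrap---is elementary; the factor of $\upphi$ in the numerator of the final bound comes out for free from the isotropy relation, and this is the structural reason why approximation on a quadric beats the generic Dirichlet rate.
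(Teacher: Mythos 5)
Your overall strategy is the right one in spirit: recast the problem as finding a short $Q$-isotropic vector for $Q(x,y)=q(x)-y^{2}$, apply the Gaudron--R\'emond quadratic Siegel's lemma, note that positive-definiteness of $q$ forces $\upphi\ne 0$, and then bootstrap via the identity $q(\upupsilon-\upphi\alpha)=-2\upphi\,b(\upupsilon-\upphi\alpha,\alpha)$ and Cauchy--Schwarz (the latter is essentially the paper's \S\ref{sec:conclusion}). But there is a genuine gap in the middle step, and it sits in your choice of archimedean norm. You take $\Vert(\upupsilon,\upphi)\Vert_{\infty}^{2}=T^{2}q(\upupsilon-\upphi\alpha)+\upphi^{2}$. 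The quadratic Siegel's lemma you invoke produces an isotropic vector of height at most $\bigl(2\gamma_{n}H(Q)\bigr)^{n/2}H(E)$, where $H(Q)$ must be computed \emph{relative to the adelic structure you chose}. For your norm one does not have $\Vert Q\Vert_{\infty}=1$: working in a $q$-orthonormal basis of $\mathbb{R}^{n}$ together with the $\upphi$-axis, the operator norm of $B$ against your form is $\Vert Q\Vert_{\infty}=\tfrac{1}{2}\bigl(T^{-2}+\sqrt{T^{-4}+4T^{-2}}\bigr)\sim T^{-1}$. Combined with your covolume $H(E)=T^{n}\sqrt{\det q}$, the Siegel bound becomes of order $(2\gamma_{n})^{n/2}T^{n/2}\sqrt{\det q}$, not the $T$-independent $(2\gamma_{n})^{n/2}\sqrt{\det q}$ you assert. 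Indeed a $T$-independent bound for your norm is impossible on its face: it would simultaneously force $\upphi$ bounded and $q(\upupsilon-\upphi\alpha)\lesssim T^{-2}$, i.e.\ arbitrarily good approximation of $\alpha$ on the rational quadric with bounded denominator, which fails for generic $\alpha$. The factor $T^{n}$ your choice injects into the covolume is not compensated by the $T^{-n/2}$ coming from $H(Q)^{n/2}$.

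The paper avoids this by using a different archimedean twist: the automorphism $\xi$ built from $\EuScript{X}$ and $\EuScript{Y}$ is simultaneously a $Q$-isometry (Lemma~\ref{lem2}) and volume-preserving (determinant $1$), so both $H(E_{t})=\sqrt{\det q}$ and $H(Q)=1$ stay constant as $t$ varies. The Siegel lemma then yields the genuinely $T$-independent bound $\Vert(\upupsilon,\upphi)\Vert_{t}\le(2\gamma_{n})^{n/2}\sqrt{\det q}$, and one reads $\upphi\le t\Vert(\upupsilon,\upphi)\Vert_{t}$ and $\vert\upphi-b(\upupsilon,\alpha)\vert=\vert\EuScript{Y}-\EuScript{X}\vert/t\le\sqrt{2}\,\Vert(\upupsilon,\upphi)\Vert_{t}/t$ directly off the new coordinates. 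This hyperbolic-rotation mixing of $b(x,\alpha)$ with $y$ is exactly the mechanism that gives the $O(1/T)$ saving you want, and it is the step your proposal needs to replace the norm $T^{2}q(\upupsilon-\upphi\alpha)+\upphi^{2}$ with.
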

Applying this statement to \[T^{\prime}=\max{\left(T,\frac{(2\gamma_{n})^{n}\det q}{T}\right)}\] for $T>0$, which always satisfies the condition $T^{\prime}\ge (2\gamma_{n})^{n/2}\sqrt{\det q}$, leads to a variant where $T$ is only assumed to be positive. Besides the real number $\sqrt{8}(2\gamma_{n})^{n}\det q$ is smaller than the $n$th root of the constant $\kappa_{f}$ which is in~\cite[Theorem 1]{mo} but we were unable to determine whether the dependency in $\det q$ is optimal or not.\par
The proof consists in finding a small isotropic vector $(\upupsilon,\upphi)$ of the quadratic form $Q(x,y)=q(x)-y^{2}$ with the quadratic Siegel's lemma obtained by the author and R{\'e}mond~\cite[Theorem 1.2]{fq}. In order to ensure the smallness of $q(\alpha\upphi-\upupsilon)$ we twist the product norm on $\mathbb{R}^{n}\times\mathbb{R}$ with a well-chosen isometry of $Q$, which is the argument at the heart of Kleinbock and Merrill's proof (written differently). Our proof is also inspired by Moshchevitin's proof, but we avoid any choice of basis. A generalization involving an algebraic extension $K$ of $\mathbb{Q}$ and several (archimedean or ultrametric) places of $K$ will be given in \S~\ref{sec3}. The main argument of the proof is the same as the one of Theorem~\ref{thm1}, but some new difficulties appear since the condition $\upphi\ne 0$ is not automatic when $q$ is not anisotropic. That is why we prefer to start by proving this particular case.
\subsection*{Acknowledgement}I thank  Ga{\"e}l R{\'e}mond for his remarks on a previous version of this article. 

\section{Proof of Theorem~\ref{thm1}}\label{sec2}
Let $\alpha\in\mathbb{R}^{n}$ such that $q(\alpha)=1$. Denote by $b\colon\mathbb{R}^{n}\times\mathbb{R}^{n}\to\mathbb{R}$ be the symmetric bilinear form associated to $q$. 
\subsection{The Euclidean lattice}\label{subsec21} Fix a real number $t\ge 1$. For any $(x,y)\in\mathbb{R}^{n}\times\mathbb{R}$, let us consider \[\EuScript{X}=\frac{1}{2}\left(\frac{1}{t}+t\right)b(x,\alpha)+\frac{1}{2}\left(\frac{1}{t}-t\right)y\quad\text{and}\quad\EuScript{Y}=\frac{1}{2}\left(\frac{1}{t}-t\right)b(x,\alpha)+\frac{1}{2}\left(\frac{1}{t}+t\right)y\]as well as the linear map $\xi$ defined by $\xi(x,y)=\left(x-b(x,\alpha)\alpha+\EuScript{X}\alpha,\EuScript{Y}\right)$. It is an automorphism of $\mathbb{R}^{n}\times\mathbb{R}$ of determinant $1$. Indeed, since $q(\alpha)\ne 0$, the $q$-orthogonal sub\-space $\{x\in\mathbb{R}^{n}\,;\ b(x,\alpha)=0\}$ is a supplement to $\mathbb{R}.\alpha$ in $\mathbb{R}^{n}$. The choice of a basis $e_{1},\ldots,e_{n-1}$ of this hyperplane provides a basis $(e_{1},0),\ldots,(e_{n-1},0),(\alpha,0),(0,1)$ of $\mathbb{R}^{n}\times\mathbb{R}$ in which the matrix of $\xi$ is written\[\begin{pmatrix}\mathrm{I}_{n-1}& 0\\ 0 & A\end{pmatrix}\quad\text{where}\quad A=\frac{1}{2}\begin{pmatrix} \sfrac{1}{t}+t & \sfrac{1}{t}-t\\ \sfrac{1}{t}-t & \sfrac{1}{t}+t\end{pmatrix}\quad\text{has determinant $1$}.\]Thus, the Euclidean norm $\Vert(x,y)\Vert=(q(x)+y^{2})^{1/2}$ on $\mathbb{R}^{n}\times\mathbb{R}$ induces another norm $\Vert(x,y)\Vert_{t}=\Vert\xi(x,y)\Vert$. In that way, we get an Euclidean lattice $E_{t}=(\mathbb{Z}^{n}\times\mathbb{Z},\Vert\cdot\Vert_{t})$ whose volume does not depend on $t$:
\begin{lemma}\label{lem1} The volume of $E_{t}$ is equal to $\sqrt{\det q}$.
\end{lemma}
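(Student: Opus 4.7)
The plan is straightforward: use the fact that $\xi$ has determinant $1$ to reduce the computation to the untwisted Euclidean lattice, and then read off the volume from the Gram matrix of the standard basis.

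More precisely, by definition $\Vert\cdot\Vert_{t}=\Vert\xi(\cdot)\Vert$, so the volume of $E_{t}$ equals the covolume of the image lattice $\xi(\mathbb{Z}^{n}\times\mathbb{Z})$ inside $(\mathbb{R}^{n+1},\Vert\cdot\Vert)$. Since $\xi$ is a linear automorphism with $\det\xi=1$ (this was established right before the statement by exhibiting a basis in which the matrix of $\xi$ is block-diagonal with blocks $\mathrm{I}_{n-1}$ and $A$, and $\det A=1$), this covolume equals the covolume of $\mathbb{Z}^{n}\times\mathbb{Z}$ for the original norm $\Vert(x,y)\Vert=(q(x)+y^{2})^{1/2}$.

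It then remains to compute the covolume of $\mathbb{Z}^{n}\times\mathbb{Z}$ with respect to this norm. Writing the Gram matrix of the canonical basis $(e_{1},0),\ldots,(e_{n},0),(0,1)$, one obtains the block-diagonal matrix
\[\begin{pmatrix} A(q) & 0\\ 0 & 1\end{pmatrix},\]
whose determinant equals $\det q$. The covolume is the square root of this determinant, hence equal to $\sqrt{\det q}$, as claimed.

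There is no real obstacle here: the only mild point is to make sure the normalization of volume chosen in the paper is the one under which a unimodular transformation preserves volume (which it is, by the earlier convention implicit in the Euclidean setup), so that the reduction via $\xi$ is legitimate.
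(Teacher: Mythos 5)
Your proposal is correct and follows the same route as the paper: reduce via $\det\xi=1$ to the untwisted lattice $(\mathbb{Z}^{n}\times\mathbb{Z},\Vert\cdot\Vert)$ and identify its volume with $\sqrt{\det q}$. You simply spell out the Gram-matrix computation that the paper's one-line proof leaves implicit.
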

\begin{proof}The volume of $E_{t}$ is also that of $\xi(\mathbb{Z}^{n}\times\mathbb{Z})$ with respect to the norm $\Vert\cdot\Vert$ that is, $\vert\det\xi\vert\times\vol(\mathbb{Z}^{n}\times\mathbb{Z},\Vert\cdot\Vert)=\sqrt{\det q}$. \end{proof} \subsection{The quadratic form}\label{quaf}Consider the regular quadratic form $Q(x,y)=q(x)-y^{2}$ on $\mathbb{Q}^{n}\times\mathbb{Q}$ which is isotropic by hypothesis. Using that $x-b(x,\alpha)\alpha$ is $q$-orthogonal to $\alpha$, the equality $Q(\xi(x,y))=Q(x,y)$ can be checked for all $(x,y)\in\mathbb{R}^{n}\times\mathbb{R}$ with a  direct calculation. In other words:
 \begin{lemma}\label{lem2} The map $\xi$ is isometric with respect to $Q$.\end{lemma}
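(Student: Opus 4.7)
My plan is to verify the identity $Q(\xi(x,y)) = Q(x,y)$ by a direct computation after exploiting the orthogonal decomposition relative to $\alpha$. First I would write $x = u + b(x,\alpha)\alpha$ where $u := x - b(x,\alpha)\alpha$. Since $q(\alpha) = 1$, the vector $u$ is $q$-orthogonal to $\alpha$, which is exactly the geometric fact the authors stress before the lemma. Then the first component of $\xi(x,y)$ is $u + \EuScript{X}\alpha$, and by orthogonality together with $q(\alpha) = 1$ we obtain
\[q(u + \EuScript{X}\alpha) = q(u) + \EuScript{X}^{2} = q(x) - b(x,\alpha)^{2} + \EuScript{X}^{2}.\]
Therefore $Q(\xi(x,y)) = q(x) - b(x,\alpha)^{2} + \EuScript{X}^{2} - \EuScript{Y}^{2}$, and the lemma reduces to the scalar identity $\EuScript{X}^{2} - \EuScript{Y}^{2} = b(x,\alpha)^{2} - y^{2}$.

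For this last identity I would not expand the squares directly but rather factorize: a short calculation gives $\EuScript{X} - \EuScript{Y} = t\bigl(b(x,\alpha) - y\bigr)$ and $\EuScript{X} + \EuScript{Y} = \bigl(b(x,\alpha) + y\bigr)/t$, so their product is precisely $b(x,\alpha)^{2} - y^{2}$ and the factors of $t$ cancel. Equivalently, one recognizes the $2\times 2$ matrix $A$ appearing in \S\ref{subsec21} as a hyperbolic rotation, which manifestly preserves the split form $X^{2} - Y^{2}$; this is really the content of the lemma.

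There is no genuine obstacle here: the entire argument is a one-line verification once the orthogonal decomposition $x = u + b(x,\alpha)\alpha$ is in place. The only minor care needed is to invoke $q(\alpha) = 1$ at the right step (both to kill the cross term $2\EuScript{X}b(u,\alpha)$ and to turn $\EuScript{X}^{2}q(\alpha)$ into $\EuScript{X}^{2}$), and to handle the $t$-dependence cleanly, which the factorized form $(\EuScript{X}-\EuScript{Y})(\EuScript{X}+\EuScript{Y})$ does transparently.
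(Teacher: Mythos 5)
Your proof is correct and follows the same route the paper intends: decompose the first component of $\xi(x,y)$ as $u+\EuScript{X}\alpha$ with $u=x-b(x,\alpha)\alpha$ $q$-orthogonal to $\alpha$, use $q(\alpha)=1$ to get $Q(\xi(x,y))=q(x)-b(x,\alpha)^{2}+\EuScript{X}^{2}-\EuScript{Y}^{2}$, and then check $\EuScript{X}^{2}-\EuScript{Y}^{2}=b(x,\alpha)^{2}-y^{2}$; the paper states this only as ``a direct calculation,'' which your factorization $(\EuScript{X}-\EuScript{Y})(\EuScript{X}+\EuScript{Y})=t(b(x,\alpha)-y)\cdot t^{-1}(b(x,\alpha)+y)$ carries out cleanly.
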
At last, at every place $p$ of $\mathbb{Q}$, we can consider the norm $\Vert B\Vert_{p}$ of the bilinear form $B$ associated to $Q$ defined by \[\Vert B\Vert_{\infty}=\sup{\left\{\frac{\vert B((x,y),(x^{\prime},y^{\prime}))\vert}{\Vert(x,y)\Vert_{t}\Vert(x^{\prime},y^{\prime})\Vert_{t}}\mid\ (x,y),\ (x^{\prime},y^{\prime})\in(\mathbb{R}^{n}\times\mathbb{R})\setminus\{0\}\right\}}\]in the archimedean case and by $\Vert B\Vert_{p}=\max_{0\le i,j\le n}{\vert B(e_{i},e_{j})\vert_{p}}$ where $\{e_{0},\ldots,e_{n}\}$ is the canonical basis of $\mathbb{Z}^{n+1}$ and the absolute value $\vert\cdot\vert_{p}$ on $\mathbb{Q}_{p}$ normalised with $\vert p\vert_{p}=p^{-1}$. By definition, the height $H(Q)$ of $Q$ is the product of all the norms $\Vert B\Vert_{p}$ over the places $p$ of $\mathbb{Q}$. Here we have the formula $B((x,y),(x^{\prime},y^{\prime}))=b(x,x^{\prime})-yy^{\prime}$ which immediatly implies $\Vert B\Vert_{p}=1$ for all prime number $p$ since $b$ has integral coefficients. Moreover, as $\xi$ is a global isometry with respect to $Q$, we also have $\Vert B\Vert_{\infty}=\sup{\{\vert B((x,y),(x^{\prime},y^{\prime}))\vert\mid\ \Vert(x,y)\Vert=\Vert(x^{\prime},y^{\prime})\Vert=1\}}$. Then, Cauchy-Schwarz inequality applied to the positive-definite quadratic form $q(x)+y^{2}=\Vert(x,y)\Vert^{2}$ gives $\Vert B\Vert_{\infty}=1$. Hence, the height $H(Q)$ of $Q$ equals to $1$. 
  \subsection{The quadratic Siegel's lemma}\label{tqsl} We now apply~\cite[Theorem 1.3]{fq} to the quadratic space $(E_{t},Q)$ over $\mathbb{Q}$ (of dimension $n+1$) and we get $(\upupsilon,\upphi)\in\mathbb{Z}^{n}\times\mathbb{N}_{\ge 1}$ such that $Q(\upupsilon,\upphi)=0$ and \[\Vert(\upupsilon,\upphi)\Vert_{t}\le\left(2\gamma_{n}H(Q)\right)^{n/2}H(E_{t})\](the height of a vector of $E_{t}$ equals to the norm of a multiple of this element, see~\cite[Example~2 p.46]{ras}). The height $H(E_{t})$ of $E_{t}$ is nothing but the volume of $(\mathbb{Z}^{n}\times\mathbb{Z},\Vert\cdot\Vert_{t})$~\cite[p. 43]{ras} that is, $\sqrt{\det q}$ by Lemma~\ref{lem1}. Also note that the constant $c_{\mathbb{Q}}^{\mathrm{BV}}(n)$ in the original statement is simply $\gamma_{n}^{n/2}$ (see \S~\ref{subsec3.1}). As $H(Q)=1$ we get $\Vert(\upupsilon,\upphi)\Vert_{t}\le\left(2\gamma_{n}\right)^{n/2}\sqrt{\det q}$ with $q(\upupsilon/\upphi)=1$.
 \subsection{Conclusion}\label{sec:conclusion}We observe that \[q\left(\alpha-\frac{\upupsilon}{\upphi}\right)=\frac{2}{\upphi}\left(\upphi-b(\upupsilon,\alpha)\right)=\frac{2}{\upphi t}\left(\EuScript{Y}-\EuScript{X}\right)\]where $\EuScript{X},\EuScript{Y}$ are relative to $(\upupsilon,\upphi)$. Hence, since $\EuScript{X}^{2}+\EuScript{Y}^{2}\le q(\upupsilon-b(\upupsilon,\alpha)\alpha)+\EuScript{X}^{2}+\EuScript{Y}^{2}=\Vert(\upupsilon,\upphi)\Vert_{t}^{2}$, Cauchy-Schwarz inequality provides the bound $q\left(\alpha-\frac{\upupsilon}{\upphi}\right)\le2\sqrt{2}\Vert(\upupsilon,\upphi)\Vert_{t}/\upphi t$. Next we note that, since $t\ge 1$, \[\upphi=\frac{1}{2}\left(t-\frac{1}{t}\right)\EuScript{X}+\frac{1}{2}\left(t+\frac{1}{t}\right)\EuScript{Y}\le t\max{\left(\vert\EuScript{X}\vert,\vert\EuScript{Y}\vert\right)}\le t\Vert(\upupsilon,\upphi)\Vert_{t}.\]We replace $\Vert(\upupsilon,\upphi)\Vert_{t}$ by $\left(2\gamma_{n}\right)^{n/2}\sqrt{\det q}$ and we set $T=t\left(2\gamma_{n}\right)^{n/2}\sqrt{\det q}$ to end the proof of Theorem~\ref{thm1}.
 
 \section{Extended statement}
 In Theorem~\ref{thm1} the form $q$ plays two distinct roles since it is used both to define the set (ellipsoid) where the approximation takes place and to measure the quality of approximation. We can give a more general statement in which two quadratic forms appear: we will approximate points on $q=1$ using another quadratic form $q_{0}$  to measure the size of the approximation. Here it is natural to retain the hypothesis that $q_{0}$ be positive-definite but we can relax the condition on $q$, allowing some non definite forms. 
 \par Let $E$ be a vector space over a field $K$ and $q\colon E\to K$ a quadratic form. The isotropy index $i(q)$ of $q$ is the maximal dimension of totally isotropic subspaces of $q$. The induced quadratic form $Q(x,y)=q(x)-y^2$ on $E\times K$ verifies $i(Q)-i(q)\in\{0,1\}$. In fact $i(Q)=i(q)+1$ when the anisotropic part of $q$ in the Witt decomposition takes the value $1$. Given a positive-definite quadratic form $q_{0}\colon\mathbb{R}^{n}\to\mathbb{R}$, we denote by $\Vert\cdot\Vert=\sqrt{q_{0}}$ the associated Euclidean norm on $\mathbb{R}^{n}$ and by $\lambda_{1}=\min{\{\Vert\lambda\Vert\mid\lambda\in\mathbb{Z}^{n}\setminus\{0\}\}}$ the first minimum of the Euclidean lattice $(\mathbb{Z}^{n},\Vert\cdot\Vert)$. A quadratic form $q(x)={}^{\mathrm{t}}xA(q)x$ associated with a symmetric matrix $A(q)\in\mathrm{M}_{n}(\mathbb{R})$ (not necessarily positive-definite) also inherits a norm by the formula \[\Vert q\Vert_{\infty}=\max{\left\{{}^{\mathrm{t}}xA(q)y\mid x,y\in\mathbb{R}^{n},\ \Vert x\Vert=\Vert y\Vert=1\right\}}.\]In this context we have the following statement.
 \begin{theo}\label{thm2}Let $\alpha\in\mathbb{R}^{n}$ such that $q(\alpha)=1$. Define \[\EuScript{T}_{0}=n^{n/2}\left(2\max{(1,\Vert q\Vert_{\infty})}\right)^{(n-i(q))/2}\Vert\alpha\Vert\sqrt{\det q_{0}}\]and $\EuScript{T}=\max{\Big(\EuScript{T}_{0}^{1/(i(q)+1)},(\sqrt{2}/\lambda_{1})^{i(q)}\EuScript{T}_{0}\Big)}$. Assume that $A(q)\in\mathrm{M}_{n}(\mathbb{Z})$ and that $i(Q)>i(q)$ where $Q(x,y)=q(x)-y^{2}$. Then, for all real number $T\ge\EuScript{T}$, there exists $(\upupsilon,\upphi)\in\mathbb{Z}^{n}\times\mathbb{Z}$ satisfying $q(\upupsilon)=\upphi^{2}\ne 0$,\[\quad\Vert\upupsilon\Vert^{2}+(\upphi\Vert\alpha\Vert)^{2}\le\left(\Vert\alpha\Vert\Vert b(\cdot,\alpha)\Vert_{\mathrm{op},\infty}T\right)^{2}\quad\text{and}\quad\vert q(\alpha\upphi-\upupsilon)\vert\le\frac{2\sqrt{2}\EuScript{T}^{2}\upphi}{T}\times\Vert b(\cdot,\alpha)\Vert_{\mathrm{op},\infty}.\] 
 \end{theo}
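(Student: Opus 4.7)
The plan is to follow the template of the proof of Theorem~\ref{thm1}, using $q_{0}$ in place of $q$ to measure sizes and replacing a single short isotropic vector by a basis of a maximal totally isotropic subspace of $Q$. First I would reuse the $Q$-isometric twist $\xi_{t}$ of \S~\ref{quaf}, still available since $q(\alpha)=1\ne 0$ yields the $q$-orthogonal decomposition $\mathbb{R}^{n}=\mathbb{R}\alpha\oplus\alpha^{\perp_{q}}$. On $\mathbb{R}^{n}\times\mathbb{R}$ I would put the ambient Euclidean norm $\Vert(x,y)\Vert^{2}=q_{0}(x)+\Vert\alpha\Vert^{2}y^{2}$ (so that $\Vert(\upupsilon,\upphi)\Vert^{2}$ coincides with the quantity $\Vert\upupsilon\Vert^{2}+(\upphi\Vert\alpha\Vert)^{2}$ appearing in the statement) and the twisted norm $\Vert(x,y)\Vert_{t}=\Vert\xi_{t}(x,y)\Vert$. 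The analogue of Lemma~\ref{lem1} then gives $\vol(\mathbb{Z}^{n}\times\mathbb{Z},\Vert\cdot\Vert_{t})=\Vert\alpha\Vert\sqrt{\det q_{0}}$.

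Next I would bound the height $H(Q)$. At finite places, $A(q)\in\mathrm{M}_{n}(\mathbb{Z})$ forces $\Vert B_{Q}\Vert_{p}=1$; at the archimedean place, the $Q$-isometry property of $\xi_{t}$ reduces the computation to the ambient norm, and Cauchy-Schwarz applied to $B_{Q}((x,y),(x^{\prime},y^{\prime}))=b(x,x^{\prime})-yy^{\prime}$ combined with the definition of $\Vert q\Vert_{\infty}$ yields $H(Q)\le 2\max(1,\Vert q\Vert_{\infty})$, accounting for the factor $(2\max(1,\Vert q\Vert_{\infty}))^{(n-i(q))/2}$ in $\EuScript{T}_{0}$.

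To secure $q(\upupsilon)=\upphi^{2}\ne 0$ I would exploit the hypothesis $i(Q)>i(q)$: fix a totally isotropic subspace $W$ of $Q$ of dimension $i(q)+1$, set $L=W\cap(\mathbb{Z}^{n}\times\mathbb{Z})$, and apply the quadratic Siegel's lemma of \cite{fq} to $L$, obtaining a basis whose product of $\Vert\cdot\Vert_{t}$-norms is bounded by essentially $\EuScript{T}_{0}$ (with exponent $(n-i(q))/2=((n+1)-i(Q))/2$ on the $H(Q)$-factor). Since $W\cap(\mathbb{R}^{n}\times\{0\})$ is totally isotropic for $q$, it has dimension at most $i(q)$, so at least one basis vector has $\upphi\ne 0$. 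The two terms in $\EuScript{T}$ correspond to the two extreme configurations: if the shortest basis vector already has $\upphi\ne 0$, the geometric-mean bound $\EuScript{T}_{0}^{1/(i(q)+1)}$ suffices; otherwise only the largest successive minimum works, and one lower-bounds the other $i(q)$ minima by $\lambda_{1}/\sqrt{2}$ --- via a mild distortion estimate comparing $\Vert\cdot\Vert_{t}$ to $\Vert\cdot\Vert$ on vectors of the form $(x,0)$ --- to isolate it, yielding the alternative bound $(\sqrt{2}/\lambda_{1})^{i(q)}\EuScript{T}_{0}$.

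Finally, as in \S~\ref{sec:conclusion}, the identity $q(\alpha\upphi-\upupsilon)=(2\upphi/t)(\EuScript{Y}-\EuScript{X})$ combined with Cauchy-Schwarz on $\EuScript{X}^{2}+\EuScript{Y}^{2}\le\Vert(\upupsilon,\upphi)\Vert_{t}^{2}$ delivers the bound on $\vert q(\alpha\upphi-\upupsilon)\vert$, the factor $\Vert b(\cdot,\alpha)\Vert_{\mathrm{op},\infty}$ arising naturally when converting the $q$-quantity $b(\upupsilon,\alpha)$ into a $\Vert\cdot\Vert$-controlled quantity on $\upupsilon$; setting $T$ proportional to $t$ completes the calibration. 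The principal obstacle is the third step: unlike Theorem~\ref{thm1}, the shortest isotropic vector of $Q$ may lie in the forbidden hyperplane $\upphi=0$, and it is precisely the two-case analysis on successive minima that forces the bifurcated shape of $\EuScript{T}$.
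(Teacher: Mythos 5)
Your outline is essentially the paper's own approach: the article in fact derives Theorem~\ref{thm2} from the adelic Theorem~\ref{thm3} (\S~\ref{sec43}), and your plan reconstructs the same machinery directly over $\mathbb{Q}$ --- the twist $\xi_{t}$, the ambient norm $q_{0}(x)+\Vert\alpha\Vert^{2}y^{2}$, a small maximal totally $Q$-isotropic subspace, the dichotomy on its first minimum backed by the distortion estimate on vectors $(x,0)$, and Minkowski's second theorem for the other successive minima. Two details need repair.

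First, the factor $2$ in $\left(2\max(1,\Vert q\Vert_{\infty})\right)^{(n-i(q))/2}$ does \emph{not} come from $H(Q)$. Lemma~\ref{lemma34} gives $H(Q)\le\max(1,\Vert q\Vert_{\infty})$ with no factor $2$: at the archimedean place one writes $yy^{\prime}=b(y\alpha,y^{\prime}\alpha)$ (using $q(\alpha)=1$), so $B_{Q}((x,y),(x^{\prime},y^{\prime}))=b(x,x^{\prime})-b(y\alpha,y^{\prime}\alpha)$ and a single Cauchy--Schwarz yields $\Vert Q\Vert_{\infty}\le\Vert q\Vert_{\infty}$ after the $\xi$-twist. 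The $2$ in $\EuScript{T}_{0}$ is instead supplied by the Cassels-type estimate $\Lambda_{1}(E_{t}/F)^{2}\ge 1/(2H(Q))$ of~\cite[Corollary~3.2]{fq}. If you keep the redundant factor $2$ inside your bound for $H(Q)$ \emph{and} also invoke~\cite{fq}, you collect an extra $2^{(n-i(q))/2}$ and prove the theorem only with a weaker constant than the one stated.

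More importantly, you cannot ``fix'' an arbitrary totally isotropic $W$ of dimension $i(q)+1$ and then expect $L=W\cap(\mathbb{Z}^{n}\times\mathbb{Z})$ to have controlled successive minima: for generic $W$ the covolume of $L$ is unbounded and nothing like $\EuScript{T}_{0}$ would come out. The small $W$ is the \emph{output} of the quadratic Siegel's lemma, not its input. Concretely, \cite[Corollary~3.2]{fq} (applied to the quadratic space $(E_{t},Q)$) produces a maximal totally isotropic $F$ with $H(F)\le\EuScript{T}_{0}/c_{\mathbb{Q}}^{\mathrm{BV}}(i(Q))$, and it is Minkowski's second theorem applied to that specific $F$ that yields the bounded basis from which you pick the vector with $\upphi\ne 0$. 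So the order is: quadratic Siegel's lemma first (producing a small $F$), Minkowski second --- not ``choose $W$, then apply~\cite{fq} to $L$.'' With these two corrections your plan is sound and coincides with the paper's argument.
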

 Here $\Vert b(\cdot,\alpha)\Vert_{\mathrm{op},\infty}$ denotes the operator norm of the linear form $x\mapsto b(x,\alpha)={}^{\mathrm{t}}xA(q)\alpha$ on $(\mathbb{R}^{n},\Vert\cdot\Vert)$. It can be bounded by $\Vert q\Vert_{\infty}\Vert\alpha\Vert$. If the proof of Theorem~\ref{thm2} follows the same lines as those of Theorem~\ref{thm1}, the new difficulty is to ensure $\upphi\ne 0$ even though $q$ is not assumed to be definite. To solve this problem, we introduce a maximal totally $Q$-isotropic sublattice $\Omega$ of $\mathbb{Z}^{n}\times\mathbb{Z}$ of small volume and we distinguish two cases according to the value of the first minimum  of $\Omega$. The proof is a special case of that of Theorem~\ref{thm3} (see \S~\ref{sec43}). Let us just say that the quantity $n^{n}$ in $\EuScript{T}_{0}$ is a bound for $\gamma_{i(q)+1}^{i(q)+1}\gamma_{n-i(q)}^{n-i(q)}$ which appears in $\EuScript{T}_{0}$ during the proof (see Theorem~\ref{thm3} and the discussion at the end of the article).
 
\section{Adelic generalization}
\label{sec3} 
\subsection{}\label{subsec3.1} Let $K/\mathbb{Q}$ be an algebraic extension. Its set of places has a structure of topological measure space $(V(K),\sigma)$ described in~\cite[\S~2]{cds}. For $v\in V(K)$ we denote by $K_{v}$ the topological completion of $K$ at $v$ and $\vert\cdot\vert_{v}$ is the unique absolute value on $K_{v}$ such that $\vert p\vert_{v}\in\{1,p,p^{-1}\}$ for every prime number $p$. The module $\vert f\vert$ of an integrable bounded function $f\colon V(K)\to(0+\infty)$ such that $\{v\in V(K)\,;\ f(v)\ne 1\}$ is contained in a compact subset is the positive real number \[\vert f\vert=\exp{\left(\int_{V(K)}{\log\left(f(v)\right)\,\mathrm{d}\sigma(v)}\right)}.\]Given an integer $n\ge 1$, a place $v\in V(K)$ and  $x=(x_{1},\ldots,x_{n})\in\mathbb{R}^{n}$, write \[
m_{v}(x)=\begin{cases}\left(x_{1}^{2}+\cdots+x_{n}^{2}\right)^{1/2} & \text{if $v\mid\infty$}\\ \max{\left(x_{1},\ldots,x_{n}\right)} & \text{if $v\nmid\infty$}.\end{cases}\]Then $\vert x\vert_{v}=m_{v}(\vert x_{1}\vert_{v},\ldots,\vert x_{n}\vert_{v})$ defines a norm on $K_{v}^{n}$ for all $v\in V(K)$. Let $\mathbb{A}_{K}=K\otimes_{\mathbb{Q}}\mathbb{A}_{\mathbb{Q}}$ be the ad{\`e}les of $K$. A rigid adelic space (of dimension $n$) is a $n$-dimensional $K$-vector space $E$ endowed with norms $\Vert\cdot\Vert_{E,v}$ on $E_{v}=E\otimes_{K}K_{v}$ for all $v\in V(K)$, satisfying the following property: there exist an isomorphism $\varphi\colon E\to K^{n}$ and an adelic matrix $(A_{v})_{v\in V(K)}\in\GL_{n}(\mathbb{A}_{K})$ such that \[\forall\,x\in E_{v},\quad\Vert x\Vert_{E,v}=\left\vert A_{v}\varphi_{v}(x)\right\vert_{v}\]where $\varphi_{v}=\varphi\otimes\mathrm{id}_{K_{v}}\colon E_{v}\to K_{v}^{n}$ is the natural extension of $\varphi$ to $E_{v}$. The height $H(E)$ of $E$ is the module of $v\mapsto \vert\det A_{v}\vert_{v}$ and the height $H_{E}(x)$ of $x\in E\setminus\{0\}$ is the module of $v\mapsto \Vert x\Vert_{E,v}$. The dual space $E^\vee$ has a rigid adelic structure given by the transpose map ${}^{\mathrm{t}}\varphi^{-1}\colon E^{\vee}\to K^{n}$ and $({}^{\mathrm{t}}A_{v}^{-1})_{v\in V(K)}$. Besides the product $E\times K$ has a natural rigid adelic structure given by the norms $\Vert(x,y)\Vert_{E\times K,v}=m_{v}(\Vert x\Vert_{E,v},\vert  y\vert_{v})$ for all $(x,y)\in E_{v}\times K_{v}$. \par To a rigid adelic space $E$ over $K$ can be attached its first minimum of Roy-Thunder $\lambda_{1}^{\Lambda}(E)=\inf{\left\{H_{E}(x)\,;\ x\in E\setminus\{0\}\right\}}$ and its first minimum of Bombieri-Vaaler $\lambda_{1}^{\mathrm{BV}}(E)$ which is the infimum of $r>0$ such that there exists $x\in E\setminus\{0\}$ satisfying $\sup_{v\mid\infty}{\Vert x\Vert_{E,v}}\le r$ and $\sup_{v\nmid\infty}{\Vert x\Vert_{E,v}}\le 1$ (note that $\lambda_{1}^{\Lambda}(E)\le\lambda_{1}^{\mathrm{BV}}(E)$). The last minimum $\lambda_{n}^{*}(E)$ is defined in the same way where $x$ is replaced by the vectors of a basis of $E$. They give rise to the following constants: Given $*\in\{\Lambda,\mathrm{BV}\}$ and a positive integer $n$, let us define
\[c_{K}^{*}(n)=\sup_{E}{\frac{\lambda_{1}^{*}(E)^{n}}{H(E)}}\in(0,+\infty]\]where $E$ varies among the rigid adelic spaces over $K$ of dimension $n$. Recall that Minkowski's second theorem implies $(\lambda_{1}^{*}(E))^{n-1}\lambda_{n}^{*}(E)\le c_{K}^{*}(n)H(E)$ \cite[Theorem 4.12]{cds} and, if we set $c_{1}(K)=c_{K}^{\mathrm{BV}}(1)$, then $\lambda_{i}^{\mathrm{BV}}(E)\le c_{1}(K)\Lambda_{i}(E)$ for all $1\le i\le n$ \cite[Proposition~$4.8$]{cds} and so $c_{K}^{\mathrm{BV}}(n)\le c_{1}(K)^{n}c_{K}^{\Lambda}(n)$. For all $*$, we have $c_{\mathbb{Q}}^{*}(n)=\gamma_{n}^{n/2}$ and $c_{K}^{*}(n)\le(n\delta_{K/\mathbb{Q}})^{n/2}$ when $K$ is a number field of root discriminant $\delta_{K/\mathbb{Q}}$~\cite[Proposition 5.1]{cds}. It is also known~\cite[\S~5.2]{cds} that $c_{\overline{\mathbb{Q}}}^{*}(1)=1$ and, for $n\ge 2$, \[c_{\overline{\mathbb{Q}}}^{*}(n)=\exp{\left(\frac{n}{2}\left(\frac{1}{2}+\cdots+\frac{1}{n}\right)\right)}.\]However, the constant $c_{K}^{*}(n)$ may be infinite when $n\ge 2$. For instance, this is the case when $K$ is a Northcott field of infinite degree (see Corollary 1.2 and Proposition 4.10 of~\cite{cds}). We say that $K$ is a Siegel field if $c_{K}^{\Lambda}(n)$ is finite for all $n\ge 1$.  At last, a quadratic space $(E,q)$ is a rigid adelic space $E$ endowed with a quadratic form $q\colon E\to K$. For  $v\in V(K)$ and $b\colon E\times E\to K$ the symmetric bilinear form associated to $q$, the norm $\Vert q\Vert_{v}$ is the supremum of $\vert b(x,y)\vert_{v}/\Vert x\Vert_{E,v}\Vert y\Vert_{E,v}$ for non-zero $x,y\in E\otimes_{K}K_{v}$. The height $H(q)$ of $q$ is the module of $v\mapsto \Vert q\Vert_{v}$ if $q\ne 0$ and $0$ otherwise. We also write $H(1,q)$ for the module of $v\mapsto\max{(1,\Vert q\Vert_{v})}$. 
\subsection{Main statements} Let $K/\mathbb{Q}$ be a Siegel field and let $(E,q)$ be an adelic quadratic space over $K$ of dimension $n\ge 1$. We present two statements according to the isotropy index of the quadratic form $Q(x,y)=q(x)-y^{2}$ on $E\times K$, which, as we have seen, is equal to $i(q)$ or $i(q)+1$. Let us begin with the case $i(Q)=i(q)+1$. For $v\in V(K)$, write $\epsilon_{v}=1$ if $v\mid\infty$ and $0$ otherwise.

\begin{theo}\label{thm3}Let $V\subset V(K)$ be a compact subset. Let $(\alpha_{v},t_{v})_{v\in V(K)}\in \left(E\times K\right)\otimes_{K}\mathbb{A}_{K}$ be such that $q(\alpha_{v})=1$ and $\vert t_{v}\vert_{v}>1$ for all $v\in V$. Let $\alpha\colon V(K)\to\mathbb{R}$ the function defined by $\alpha(v)=\Vert\alpha_{v}\Vert_{E,v}$ if $v\in V$ and $\alpha(v)=1$ if $v\not\in V$. Let us assume that the quadratic form $Q(x,y)=q(x)-y^{2}$ on $E\times K$ has its isotropy index $i(Q)$ equal to $i(q)+1$. Define \[\EuScript{T}_{0}=c_{K}^{\mathrm{BV}}(i(Q))c_{K}^{\Lambda}(n+1-i(Q))\left(2H(1,q)\right)^{(n+1-i(Q))/2}\vert\alpha\vert H(E)\]and 
\[\EuScript{T}=\max{\left(\EuScript{T}_{0}^{1/i(Q)},\left(\sqrt{2}/\lambda_{1}^{\mathrm{BV}}(E)\right)^{i(Q)-1}\EuScript{T}_{0}\right)}.\]We assume that $c_{1}(K)$ is finite (in particular $\EuScript{T}$ is finite).  For $v\in V$, define \[T_{v}=(2\EuScript{T})^{\epsilon_{v}}\Vert\alpha_{v}\Vert_{E,v}\Vert b(\cdot,\alpha_{v})\Vert_{E^{\vee},v}\vert t_{v}/2\vert_{v}.\]Then, for all $\varepsilon>0$, there exists $(\upupsilon,\upphi)\in E\times K$ satisfying
$q(\upupsilon)=\upphi^{2}\ne 0$ and such that:
\begin{equation}\forall\,v\not\in V,\quad m_{v}\left(\Vert\upupsilon\Vert_{E,v},\vert\upphi\vert_{v}\right)
\le \left((1+\varepsilon)\EuScript{T}\right)^{\epsilon_{v}},\end{equation}
\begin{equation}\forall\,v\in V,\quad m_{v}\left(\Vert\upupsilon\Vert_{E,v},\vert\upphi\vert_{v}\Vert\alpha_{v}\Vert_{E,v}\right)\le\left(1+\varepsilon\right)^{\epsilon_{v}}T_{v}\end{equation}and
 \begin{equation}\forall\,v\in V,\quad \left\vert q\left(\alpha_{v}\upphi-\upupsilon\right)\right\vert_{v}\le\left((1+\varepsilon)2\sqrt{2}\EuScript{T}^{2}\right)^{\epsilon_{v}}\left(\frac{\vert\upphi\vert_{v}}{T_{v}}\right)\Vert\alpha_{v}\Vert_{E,v}\Vert b(\cdot,\alpha_{v})\Vert_{E^{\vee},v}^{2}.\end{equation}
\end{theo}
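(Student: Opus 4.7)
The plan is to adelize the argument of \S\ref{sec2}, twisting the canonical rigid adelic structure on $E\times K$ at each place $v\in V$ by a local $Q$-isometry $\xi_v$ encoding the pair $(\alpha_v,t_v)$, and then applying the adelic quadratic Siegel's lemma of~\cite{fq} to the twisted space. The only genuinely new difficulty compared with Theorem~\ref{thm1} is that $q$ is no longer assumed to be anisotropic, so that $\upphi\ne 0$ must be enforced by a separate argument based on a maximal totally $Q$-isotropic subspace.

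\emph{Twisted adelic structure.} For $v\in V$, mimic~\S\ref{subsec21} by setting
\[\EuScript{X}_v=\tfrac{1}{2}(t_v^{-1}+t_v)b(x,\alpha_v)+\tfrac{1}{2}(t_v^{-1}-t_v)y,\qquad\EuScript{Y}_v=\tfrac{1}{2}(t_v^{-1}-t_v)b(x,\alpha_v)+\tfrac{1}{2}(t_v^{-1}+t_v)y,\]
and $\xi_v(x,y)=(x-b(x,\alpha_v)\alpha_v+\EuScript{X}_v\alpha_v,\EuScript{Y}_v)$, a $Q$-isometry of determinant $1$ exactly as in Lemma~\ref{lem2}. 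Replace $\Vert\cdot\Vert_{E\times K,v}$ by $\Vert\xi_v(\cdot)\Vert_{E\times K,v}$ for $v\in V$, keeping all other local norms unchanged; call the resulting rigid adelic quadratic space $(F,Q)$. Since each $\xi_v$ has local determinant $1$ and preserves $Q$, one has $H(F)=H(E\times K)=H(E)$ and $\Vert Q\Vert_{F,v}=\Vert Q\Vert_{E\times K,v}$ for all $v$, whence $H(1,Q)_F\le 2H(1,q)$ by direct comparison of the bilinear forms.

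\emph{Siegel bound and non-vanishing of $\upphi$.} Apply~\cite[Theorem~1.3]{fq} to $(F,Q)$. Since $i(Q)=i(q)+1\ge 1$, this yields a non-zero $Q$-isotropic vector of adelic height at most $c_K^{\mathrm{BV}}(i(Q))c_K^{\Lambda}(n+1-i(Q))H(1,Q)_F^{(n+1-i(Q))/2}H(F)\le\EuScript{T}_0/\vert\alpha\vert$. The main obstacle is to guarantee $\upphi\ne 0$. Let $\Omega\subset E\times K$ be a maximal totally $Q$-isotropic $K$-subspace, of dimension $i(Q)$: its intersection with $E\times\{0\}$ corresponds to a totally $q$-isotropic subspace of $E$, hence has dimension at most $i(q)=i(Q)-1$, so $\Omega$ is not contained in $\{\upphi=0\}$. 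Apply the adelic second theorem~\cite[Theorem~4.12]{cds} to $\Omega$ inside $F$: either the first Bombieri--Vaaler minimum of $\Omega$ is small enough to provide an isotropic vector of height $\le\EuScript{T}_0^{1/i(Q)}$ that can be chosen with $\upphi\ne 0$ by the dimension count, or it is large and the resulting upper bound on the last minimum gives instead a vector of height $\le(\sqrt{2}/\lambda_1^{\mathrm{BV}}(E))^{i(Q)-1}\EuScript{T}_0$. The two cases merge into the $\max$ defining $\EuScript{T}$.

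\emph{From the Siegel bound to the three inequalities.} Inequality~(1) is read off the adelic height bound at places $v\notin V$, where the norm of $F$ coincides with that of $E\times K$; the slack $(1+\varepsilon)^{\epsilon_v}$ absorbs the fact that the module of a function on $V(K)$ need only be approximated to arbitrary precision by finite products when $K/\mathbb{Q}$ is infinite. At each $v\in V$, inverting the block-matrix form of $\xi_v$ yields, up to an absolute constant, $m_v(\Vert\upupsilon\Vert_{E,v},\vert\upphi\vert_v\Vert\alpha_v\Vert_{E,v})\le\vert t_v\vert_v\Vert\alpha_v\Vert_{E,v}\Vert b(\cdot,\alpha_v)\Vert_{E^{\vee},v}\Vert\xi_v(\upupsilon,\upphi)\Vert_{E\times K,v}$, which is~(2) after invoking $\Vert\xi_v(\upupsilon,\upphi)\Vert_{E\times K,v}\le(1+\varepsilon)\EuScript{T}$. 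Finally, the identity
\[q(\alpha_v\upphi-\upupsilon)=\frac{2}{\upphi t_v}(\EuScript{Y}_v-\EuScript{X}_v)\]
from~\S\ref{sec:conclusion}, combined with $\max(\vert\EuScript{X}_v\vert_v,\vert\EuScript{Y}_v\vert_v)\le\Vert\xi_v(\upupsilon,\upphi)\Vert_{E\times K,v}$ (via Cauchy--Schwarz at archimedean places and the ultrametric inequality elsewhere), yields~(3) after multiplying by the factor $\Vert\alpha_v\Vert_{E,v}\Vert b(\cdot,\alpha_v)\Vert_{E^{\vee},v}^{2}$ needed to rewrite the resulting bound in terms of $T_v$.
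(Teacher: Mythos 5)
Your plan tracks the paper's proof at a high level — twist the adelic norms on $E\times K$ at each $v\in V$ by a local $Q$-isometry $\xi_v$, apply a quadratic Siegel's lemma to the twisted space, and enforce $\upphi\ne 0$ by a case split according to whether a first minimum is small or large. But two steps are genuinely broken, not merely under-detailed.

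First, you take $\Omega$ to be an \emph{arbitrary} maximal totally $Q$-isotropic subspace of $E\times K$. Nothing then controls $H(\Omega)$, and the applications of Minkowski's second theorem to $\Omega$ can only yield bounds in terms of $H(\Omega)$, which may be astronomically large; the numerical bounds $\EuScript{T}_0^{1/i(Q)}$ and $(\sqrt{2}/\lambda_1^{\mathrm{BV}}(E))^{i(Q)-1}\EuScript{T}_0$ simply do not follow. The paper's essential ingredient here is Corollary~3.2 of~\cite{fq}, which produces a maximal $Q$-isotropic $F\subset E_t$ together with the \emph{lower} bound $(1+\varepsilon)^{-1/2n}\le 2H(Q)\Lambda_1(E_t/F)^2$; combined with $\Lambda_1(E_t/F)\le\bigl(c_K^{\Lambda}(n+1-i(Q))H(E_t/F)\bigr)^{1/(n+1-i(Q))}$ and $H(E_t/F)=H(E_t)/H(F)$, this is precisely what yields $H(F)\le(1+\varepsilon)^{1/2}\EuScript{T}_0/c_K^{\mathrm{BV}}(i(Q))$. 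You instead cite Theorem~1.3 of~\cite{fq}, which gives an isotropic \emph{vector}, and then abandon that vector when you switch to $\Omega$; the needed height bound on the subspace never appears.

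Second, and more seriously, your claim that in the small-first-minimum case the short isotropic vector ``can be chosen with $\upphi\ne 0$ by the dimension count'' is wrong. The dimension count ($\dim(\Omega\cap(E\times\{0\}))\le i(q)<i(Q)$) only shows $\Omega\not\subset E\times\{0\}$; it does not prevent the particular vector realizing $\lambda_1^{\mathrm{BV}}(\Omega)$ from lying in $E\times\{0\}$. The paper's argument is metric, not dimensional: Lemma~\ref{lemma33} gives $\Vert x\Vert_{E,v}\le 2^{\epsilon_v/2}\Vert(x,0)\Vert_{E_t,v}$, so if the Bombieri–Vaaler vector had $\upphi=0$ one would get $\lambda_1^{\mathrm{BV}}(E)\le\sqrt{2}\,(1+\varepsilon')^{1/2}\lambda_1^{\mathrm{BV}}(F)$, contradicting the hypothesis of case $(i)$, namely $\lambda_1^{\mathrm{BV}}(F)<\lambda_1^{\mathrm{BV}}(E)/\sqrt{2}$. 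That lemma, and the precise $\sqrt{2}$-threshold it entails, are indispensable and absent from your sketch; the dimension argument is only what makes case $(ii)$ work (one can then pick a basis vector of $F$ with nonzero last coordinate, of height $\le\lambda_{i(Q)}^{\mathrm{BV}}(F)$).

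A third, smaller point: your twisted structure composes $\Vert\cdot\Vert_{E\times K,v}$ with $\xi_v$ but omits the rescaling of the $y$-slot by $\alpha_v$. The paper's $\Vert(x,y)\Vert_{E_t,v}$ carries $\Vert\EuScript{Y}_v\alpha_v\Vert_{E,v}$ in the second component, which is why $H(E_t)=\vert\alpha\vert H(E)$ and why Lemma~\ref{lemma32} produces the exact factors $\Vert\alpha_v\Vert_{E,v}\Vert b(\cdot,\alpha_v)\Vert_{E^{\vee},v}$ appearing in $T_v$. Without that rescaling the left-hand sides of~(2) and~(3) do not match the norm you defined, and the constants drift.
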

The number $\Vert b(\cdot,\alpha_{v})\Vert_{E^{\vee},v}$ is the operator norm of the linear form $x\mapsto b(x,\alpha_{v})$ on $E_{v}$. Besides the number $\vert\alpha\vert$ is the module of the map $\alpha$ (see the beginning of \S~\ref{subsec3.1}). When $K$ is a number field we can take $\varepsilon=0$. A discussion about the constant of $\EuScript{T}_{0}$ is made at the end of the article.
\par Our second statement concerns the other case $i(Q)=i(q)$.

\begin{theo}\label{thm4}Consider \[\EuScript{T}_{1}=4\min{\left(c_{1}(K),\frac{c_{K}^{\mathrm{BV}}(n+1-i(Q))}{c_{K}^{\Lambda}(n+1-i(Q))}\right)}^{2}\left(\frac{\sqrt{2}}{\lambda_{1}^{\mathrm{BV}}(E)}\right)^{i(Q)}\EuScript{T}_{0}^{2}\](where $\EuScript{T}_{0}$ has been defined in the previous theorem). If $i(Q)=i(q)\ge 1$ then Theorem~\ref{thm3} remains true provided $\EuScript{T}_{0}$ which is in the definition of $\EuScript{T}$ be replaced by $\max{\{\EuScript{T}_{0},\EuScript{T}_{1}\}}$.
\end{theo}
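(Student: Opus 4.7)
The plan is to adapt the proof of Theorem~\ref{thm3} to handle the extra obstacle that, when $i(Q)=i(q)$, a maximal totally $Q$-isotropic subspace of $E\times K$ may be entirely contained in $E\times\{0\}$; consequently the small $Q$-isotropic vector produced by the quadratic Siegel's lemma could have $\upphi=0$, which is forbidden. As in the proof of Theorem~\ref{thm3}, I would endow $E\times K$ with the twisted rigid adelic structure defined by isometries $\xi_v$ of $Q$ depending on $t_v$ at each $v\in V$ and by the unchanged product norms at the other places. All the bookkeeping that deduces (1)--(3) from a single small $(\upupsilon,\upphi)$ satisfying $Q(\upupsilon,\upphi)=0$ and $\upphi\ne 0$ is identical to the previous proof; only the \emph{existence} of such a vector requires new work.

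Since the desired conclusion $q(\upupsilon)=\upphi^{2}\ne 0$ forces $q$ to represent $1$ globally, the proof may assume that a solution $x_{0}\in E$ of $q(x_{0})=1$ exists; then $(x_{0},1)\in E\times K$ is $Q$-isotropic with nonzero second coordinate. I would pick a maximal totally $Q$-isotropic subspace $\Omega\subset E\times K$ containing $(x_{0},1)$; then the projection $(\upupsilon,\upphi)\mapsto\upphi$ is nonzero on $\Omega$, hence $\Omega\cap(E\times\{0\})$ has codimension exactly $1$ in $\Omega$. Such an $\Omega$ of small height is furnished by~\cite[Theorem 1.2]{fq}, with $H(\Omega)$ controlled in terms of $H(1,Q)$, $H(E\times K)$ and $c_{K}^{\Lambda}(n+1-i(Q))$.

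Next I would bound $\lambda_{i(Q)}^{\mathrm{BV}}(\Omega)$, viewed inside the twisted $(E\times K)_{t}$, by combining Minkowski's second inequality $\prod_{i=1}^{i(Q)}\lambda_{i}^{\mathrm{BV}}(\Omega)\le c_{K}^{\mathrm{BV}}(i(Q))H(\Omega)$ with lower bounds on $\lambda_{i}^{\mathrm{BV}}(\Omega)$ for $i<i(Q)$ coming from $\lambda_{1}^{\mathrm{BV}}(E)$; converting between Bombieri--Vaaler and Roy--Thunder minima through $c_{1}(K)$ or through the ratio $c_{K}^{\mathrm{BV}}/c_{K}^{\Lambda}$ and keeping the better estimate produces the $\min$ factor appearing in $\EuScript{T}_{1}$. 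Because $\Omega\cap(E\times\{0\})$ has codimension $1$ in $\Omega$, linearly independent vectors realizing the successive minima cannot all lie in $E\times\{0\}$; extracting one that does not gives a $Q$-isotropic vector with $\upphi\ne 0$ whose twisted norm is at most $\lambda_{i(Q)}^{\mathrm{BV}}(\Omega)$, which is precisely the increase that forces $\EuScript{T}_{0}$ to be replaced by $\max(\EuScript{T}_{0},\EuScript{T}_{1})$.

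The hard part will be the translation between norms: the isometry $\xi$ preserves $Q$ but not the adelic norm, so the minima of $\Omega$ computed inside the twisted $(E\times K)_{t}$ may differ noticeably from those computed with the original product norm on $E\times K$. One must control this discrepancy place by place and match it against the precise combinatorial shape of $\EuScript{T}_{1}$, while ensuring that a single global choice of $\Omega$ delivers local estimates at every $v\in V$ simultaneously. This interplay is presumably what dictates the factor $(\sqrt{2}/\lambda_{1}^{\mathrm{BV}}(E))^{i(Q)}$ and the quadratic dependence on $\EuScript{T}_{0}$ inside $\EuScript{T}_{1}$.
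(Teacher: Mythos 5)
There is a genuine gap at the heart of your proposal. You correctly identify that when $i(Q)=i(q)$ the maximal totally $Q$-isotropic subspace furnished by the quadratic Siegel's lemma may sit inside $E\times\{0\}$, and that one must produce a small maximal isotropic subspace transversal to $E\times\{0\}$. But the way you propose to obtain it does not work: you say you would "pick a maximal totally $Q$-isotropic subspace $\Omega$ containing $(x_0,1)$" and that "such an $\Omega$ of small height is furnished by \cite[Theorem 1.2]{fq}." That result produces a small maximal isotropic subspace, but it gives you no control whatsoever over whether that subspace contains a prescribed isotropic vector such as $(x_0,1)$, or even whether it meets the complement of $E\times\{0\}$. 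You are conflating two different requirements (being small and passing through a given point), and \cite[Theorem 1.2]{fq} only delivers the first. This is exactly the difficulty that makes Theorem~\ref{thm4} harder than Theorem~\ref{thm3}, and your proof never confronts it.

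The paper's actual argument handles this by a more delicate device. Starting from a small maximal isotropic $F$ (from \cite[Corollary 3.2]{fq}), one assumes the bad case $F\subset E\times\{0\}$ and then chooses an element $a\in E_t/F$ with controlled height $H_{E_t/F}(a)\le 2(1+\varepsilon)^{1/4n}\lambda_{n+1-i(Q)}^{\mathrm{BV}}(E_t/F)$, selected as at the start of the proof of \cite[Theorem 7.1]{fq} so that the corresponding $Q$-isometry moves $F$ off $E\times\{0\}$. The "key lemma" of \cite[\S~3]{fq} then yields a new maximal isotropic $F_a\not\subset E\times\{0\}$ with $H(F_a)\le 2H(Q)\,H_{E_t/F}(a)^2\,H(F)$. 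Running the end of the proof of Theorem~\ref{thm3} with $F_a$ in place of $F$, and bounding $H_{E_t/F}(a)$ in the two ways you allude to (via $c_1(K)\Lambda_{n+1-i(Q)}$ or via $c_K^{\mathrm{BV}}(n+1-i(Q))$ and Minkowski), produces the $\min$ factor, while the lower bound $H(F)\ge\lambda_1^{\mathrm{BV}}(F)^{i(Q)}/c_K^{\mathrm{BV}}(i(Q))$ together with $\lambda_1^{\mathrm{BV}}(F)\ge\lambda_1^{\mathrm{BV}}(E)/\sqrt{2}$ (Lemma~\ref{lemma33}, since $F\subset E\times\{0\}$) gives the factor $(\sqrt2/\lambda_1^{\mathrm{BV}}(E))^{i(Q)}$ and the $\EuScript{T}_0^2$. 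Your write-up anticipates some of these numerical features but, without the isometry step $F\rightsquigarrow F_a$, has no mechanism to produce a small isotropic subspace with $\upphi\ne 0$, so the argument cannot be completed as stated.
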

\subsection{Preparatory statements}In this part we prove three auxiliary results useful for the proofs of Theorems \ref{thm3} and~\ref{thm4}. The notation are those of these statements.

Since only the absolute value of $t_{v}$ occurs, we can assume $t_{v}=\vert t_{v}\vert_{v}$ if $v\in V$ is archimedean. For $v\in V$, define $\EuScript{X}_{v},\EuScript{Y}_{v}\in K_{v}$  with the formulas of \S~\ref{subsec21}, where $(\alpha,t)$ is replaced by $(\alpha_{v},t_{v})$, and $b\colon E\times E\to K$ is still the symmetric bilinear form associated to $q$. Let $E_{t}$ be the rigid adelic space $E\times K$ where each norm at $v\in V$ has been twisted in the following way:
 \[\forall(x,y)\in E_{v}\times K_{v},\quad\Vert(x,y)\Vert_{E_{t},v}=m_{v}\left(\Vert x-b(x,\alpha_{v})\alpha_{v}+\EuScript{X}_{v}\alpha_{v}\Vert_{E,v},\Vert\EuScript{Y}_{v}\alpha_{v}\Vert_{E,v}\right)\](when $v\not\in V$, we have $\Vert(x,y)\Vert_{E_{t},v}=m_{v}\left(\Vert x\Vert_{E,v},\vert y\vert_{v}\right)=\Vert(x,y)\Vert_{E\times K,v}$). So, to build this norm, we  first modify the norm on $E\times K$ at $v$ by multiplying the second component by $\alpha(v)$ and then we compose with the automorphism $\xi_{v}(x,y)=\left(x-b(x,\alpha_{v})\alpha_{v}+\EuScript{X}_{v}\alpha_{v},\EuScript{Y}_{v}\right)$ of $E_{v}\times K_{v}$, which has determinant $1$. In particular we have $H(E_{t})=\vert\alpha\vert H(E\times K)=\vert\alpha\vert H(E)$.  Here are two properties of the norm $\Vert\cdot\Vert_{E_{t},v}$.
 \begin{lemma}\label{lemma32} For all $v\in V$, $x\in E\otimes_{K}K_{v}$ and $y\in K_{v}$, we have \[m_{v}(\Vert x\Vert_{E,v},\Vert y\alpha_{v}\Vert_{E,v})\le2^{\epsilon_{v}}\vert t_{v}/2\vert_{v}\left(\Vert b(\cdot,\alpha_{v})\Vert_{E^{\vee},v}\Vert\alpha_{v}\Vert_{E,v}\right)\Vert(x,y)\Vert_{E_{t},v}.\]
 \end{lemma}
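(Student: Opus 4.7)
The plan is to invert the automorphism $\xi_v$ underlying the definition of $\Vert\cdot\Vert_{E_t,v}$ and to control the operator norm of the inverse in each coordinate. The key starting observation is that, because $q(\alpha_v)=1$, the first coordinate $x':=x-b(x,\alpha_v)\alpha_v+\EuScript{X}_v\alpha_v$ of $\xi_v(x,y)$ satisfies $b(x',\alpha_v)=\EuScript{X}_v$ (the $q$-orthogonal part contributes zero and $b(\EuScript{X}_v\alpha_v,\alpha_v)=\EuScript{X}_v q(\alpha_v)$). Two consequences follow for free: first, $\vert\EuScript{X}_v\vert_v\le\Vert b(\cdot,\alpha_v)\Vert_{E^{\vee},v}\Vert x'\Vert_{E,v}\le\Vert b(\cdot,\alpha_v)\Vert_{E^{\vee},v}\Vert(x,y)\Vert_{E_t,v}$; second, taking $x'=\alpha_v$ in the same operator inequality yields $\Vert b(\cdot,\alpha_v)\Vert_{E^{\vee},v}\Vert\alpha_v\Vert_{E,v}\ge\vert q(\alpha_v)\vert_v=1$, a fact I would use repeatedly to absorb residual terms. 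Independently, by construction $\vert\EuScript{Y}_v\vert_v\Vert\alpha_v\Vert_{E,v}=\Vert\EuScript{Y}_v\alpha_v\Vert_{E,v}\le\Vert(x,y)\Vert_{E_t,v}$.

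Next I would invert the $2\times 2$ linear system of determinant $1$ from \S~\ref{subsec21} to obtain
\[b(x,\alpha_v)-\EuScript{X}_v=\frac{(t_v-1)^{2}}{2t_v}\EuScript{X}_v+\frac{t_v^{2}-1}{2t_v}\EuScript{Y}_v,\qquad y=\frac{t_v^{2}-1}{2t_v}\EuScript{X}_v+\frac{t_v^{2}+1}{2t_v}\EuScript{Y}_v.\]
Since $\vert t_v\vert_v>1$: at an ultrametric place $t_v$ dominates $1/t_v$, so every coefficient has $v$-adic absolute value at most $\vert t_v/2\vert_v$; at an archimedean place (taking $t_v\ge 1$ real) the coefficients are nonnegative with row sums $t_v-1$ and $t_v$, each at most $2\vert t_v/2\vert_v$. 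Combining both cases yields $\max(\vert b(x,\alpha_v)-\EuScript{X}_v\vert_v,\vert y\vert_v)\le 2^{\epsilon_v}\vert t_v/2\vert_v\max(\vert\EuScript{X}_v\vert_v,\vert\EuScript{Y}_v\vert_v)$.

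To conclude, I would write $x=x'+(b(x,\alpha_v)-\EuScript{X}_v)\alpha_v$ and apply the triangle (archimedean) or ultrametric inequality to bound $\Vert x\Vert_{E,v}$ by $\Vert x'\Vert_{E,v}$ plus $\vert b(x,\alpha_v)-\EuScript{X}_v\vert_v\Vert\alpha_v\Vert_{E,v}$, while $\Vert y\alpha_v\Vert_{E,v}=\vert y\vert_v\Vert\alpha_v\Vert_{E,v}$ is immediate. Substituting the estimate from the previous paragraph together with the dual-norm bounds on $\EuScript{X}_v$ and $\EuScript{Y}_v$ yields, for each component separately, the target bound $2^{\epsilon_v}\vert t_v/2\vert_v\Vert b(\cdot,\alpha_v)\Vert_{E^{\vee},v}\Vert\alpha_v\Vert_{E,v}\Vert(x,y)\Vert_{E_t,v}$, the factor $\Vert b(\cdot,\alpha_v)\Vert_{E^{\vee},v}\Vert\alpha_v\Vert_{E,v}\ge 1$ being used to swallow the lone $\Vert x'\Vert_{E,v}$ term. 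Taking $m_v$ of the two estimates concludes. The main subtlety is the archimedean case, where $m_v$ is the $\ell^{2}$-combination: one must check that bounding each coordinate by the same constant does not incur an unwanted $\sqrt{2}$. This should follow from the direct operator-norm calculation showing that the symmetric matrix appearing in the inverted system has spectral radius at most $t_v$, i.e.\ $\vert b(x,\alpha_v)-\EuScript{X}_v\vert_v^{2}+\vert y\vert_v^{2}\le t_v^{2}(\vert\EuScript{X}_v\vert_v^{2}+\vert\EuScript{Y}_v\vert_v^{2})$, which combines the two coordinates in one shot and avoids the $\sqrt{2}$ loss.
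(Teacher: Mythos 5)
Your decomposition is a mirror image of the paper's: instead of computing $\xi_{v}^{-1}(x,y)$ and bounding its norm as the paper does, you take $\xi_{v}(x,y)=(x',\EuScript{Y}_{v})$ as given and reconstruct $x,y$ from it. The observation $b(x',\alpha_{v})=\EuScript{X}_{v}$ is a nice clean route to the dual-norm bound on $\EuScript{X}_{v}$, and your ultrametric case is fine. The archimedean case, however, has a genuine gap in the final combination step.

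The problem is that the spectral-radius bound
\[\vert b(x,\alpha_{v})-\EuScript{X}_{v}\vert_{v}^{2}+\vert y\vert_{v}^{2}\le t_{v}^{2}\bigl(\vert\EuScript{X}_{v}\vert_{v}^{2}+\vert\EuScript{Y}_{v}\vert_{v}^{2}\bigr),\]
while true (the eigenvalues of your symmetric matrix do lie in $(-t_{v},t_{v}]$), is applied as a black box after you have already committed to the triangle inequality $\Vert x\Vert_{E,v}\le\Vert x'\Vert_{E,v}+\vert b(x,\alpha_{v})-\EuScript{X}_{v}\vert_{v}\Vert\alpha_{v}\Vert_{E,v}$. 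If you now set $u=\Vert x'\Vert_{E,v}$, $w=\Vert\EuScript{Y}_{v}\alpha_{v}\Vert_{E,v}$ and $K=\Vert b(\cdot,\alpha_{v})\Vert_{E^{\vee},v}\Vert\alpha_{v}\Vert_{E,v}\ge 1$, what the spectral bound plus $\vert\EuScript{X}_{v}\vert_{v}\le\Vert b(\cdot,\alpha_{v})\Vert_{E^{\vee},v}u$ and $\vert\EuScript{Y}_{v}\vert_{v}\Vert\alpha_{v}\Vert_{E,v}=w$ gives is
\[\Vert x\Vert_{E,v}^{2}+\Vert y\alpha_{v}\Vert_{E,v}^{2}\le\bigl(u+t_{v}K\sqrt{u^{2}+w^{2}}\bigr)^{2}\le(1+t_{v}K)^{2}(u^{2}+w^{2}),\]
i.e.\ the constant $1+t_{v}K$ rather than $t_{v}K$. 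The factor $K\ge1$ does not swallow the lone $u$ \emph{after} you have lumped the other two terms into a single Euclidean length; it has to be used \emph{before}.

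The repair is exactly the computation the paper performs: keep the linear structure in $(u,w)$ until the very end. Writing $\vert b(x,\alpha_{v})-\EuScript{X}_{v}\vert_{v}\Vert\alpha_{v}\Vert_{E,v}\le\tfrac12\bigl(t_{v}+\tfrac{1}{t_{v}}-2\bigr)Ku+\tfrac12\bigl(t_{v}-\tfrac{1}{t_{v}}\bigr)w$ and adding the stray $u$, then using $1\le K$ on the $u$-coefficient and also on the $w$-coefficient, gives $\Vert x\Vert_{E,v}\le K\bigl[\tfrac12(t_{v}+\tfrac{1}{t_{v}})u+\tfrac12(t_{v}-\tfrac{1}{t_{v}})w\bigr]$ and similarly for $\Vert y\alpha_{v}\Vert_{E,v}$. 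The matrix that then appears is $A^{-1}=\tfrac12\begin{pmatrix}t_{v}+1/t_{v}&t_{v}-1/t_{v}\\t_{v}-1/t_{v}&t_{v}+1/t_{v}\end{pmatrix}$ with nonnegative entries and eigenvalues $t_{v}$ and $1/t_{v}$; its operator norm is $t_{v}$ (equivalently, expand and use $2uw\le u^{2}+w^{2}$ as the paper does). Your matrix $M$ differs from $A^{-1}$ by $-1$ on the upper-left diagonal entry, and bounding by its spectral radius alone loses exactly the cross term you need.
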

 \begin{proof}The question is to bound the operator norm of $\xi_{v}^{-1}$ when $E_{v}\times K_{v}$ is endowed with the norm $m_{v}(\Vert x\Vert_{E,v},\Vert y\alpha_{v}\Vert_{E,v})$. For any $(x,y)\in E_{v}\times K_{v}$, we have the formula 
$\xi_{v}^{-1}(x,y)=\left(x-b(x,\alpha_{v})\alpha_{v}+\EuScript{X}_{v}^{\prime}\alpha_{v},\EuScript{Y}_{v}^{\prime}\right)$ where \[\EuScript{X}_{v}^{\prime}=\frac{1}{2}\left(t_{v}+\frac{1}{t_{v}}\right)b(x,\alpha_{v})+\frac{1}{2}\left(t_{v}-\frac{1}{t_{v}}\right)y\quad\text{and}\quad\EuScript{Y}_{v}^{\prime}=\frac{1}{2}\left(t_{v}-\frac{1}{t_{v}}\right)b(x,\alpha_{v})+\frac{1}{2}\left(t_{v}+\frac{1}{t_{v}}\right)y.\]When $v\in V$ is ultrametric, we have  \[\max{\left(\Vert\EuScript{X}_{v}^{\prime}\alpha_{v}\Vert_{v},\Vert\EuScript{Y}_{v}^{\prime}\alpha_{v}\Vert_{E,v}\right)}\le\max{\left(\Vert x\Vert_{E,v},\Vert y\alpha_{v}\Vert_{E,v}\right)}\times\vert t_{v}/2\vert_{v}\Vert b(\cdot,\alpha_{v})\Vert_{E^{\vee},v}\Vert\alpha_{v}\Vert_{E,v}\]since  $\vert t_{v}\vert_{v}\ge 1$ and \begin{equation}\label{equn}\vert b(x,\alpha_{v})\vert_{v}\le\Vert b(\cdot,\alpha_{v})\Vert_{E^{\vee},v}\Vert x\Vert_{E,v}\quad\text{and}\quad 1=\vert b(\alpha_{v},\alpha_{v})\vert_{v}\le\Vert b(\cdot,\alpha_{v})\Vert_{E^{\vee},v}\Vert\alpha_{v}\Vert_{E,v}.\end{equation}We easily deduce that the same bound holds for  $\max{\left(\Vert x-b(x,\alpha_{v})\alpha_{v}+\EuScript{X}_{v}^{\prime}\alpha_{v}\Vert_{E,v},\Vert\EuScript{Y}_{v}^{\prime}\alpha_{v}\Vert_{E,v}\right)}$, which gives the desired result.  
 When $v$ is archimedean, observe that \[m_{v}\left(\Vert x-b(x,\alpha_{v})\alpha_{v}+\EuScript{X}_{v}^{\prime}\alpha_{v}\Vert_{E,v},\Vert\EuScript{Y}_{v}^{\prime}\alpha_{v}\Vert_{E,v}\right)^{2}\le\left(\Vert x\Vert_{E,v}+\Vert\left(\EuScript{X}_{v}^{\prime}-b(x,\alpha_{v})\right)\alpha_{v}\Vert_{E,v}\right)^{2}+\Vert\EuScript{Y}_{v}^{\prime}\alpha_{v}\Vert_{E,v}^{2}.\]We note \[\Vert\left(\EuScript{X}_{v}^{\prime}-b(x,\alpha_{v})\right)\alpha_{v}\Vert_{E,v}\le\frac{1}{2}\left(t_{v}+\frac{1}{t_{v}}-2\right)\Vert b(x,\alpha_{v})\alpha_{v}\Vert_{E,v}+\frac{1}{2}\left(t_{v}-\frac{1}{t_{v}}\right)\Vert y\alpha_{v}\Vert_{E,v}\]and \[\Vert\EuScript{Y}_{v}^{\prime}\alpha_{v}\Vert_{E,v}\le\frac{1}{2}\left(t_{v}-\frac{1}{t_{v}}\right)\Vert b(x,\alpha_{v})\alpha_{v}\Vert_{E,v}+\frac{1}{2}\left(t_{v}+\frac{1}{t_{v}}\right)\Vert y\alpha_{v}\Vert_{E,v}.\]Then, using~\eqref{equn}, we can factorize by the product $\Vert b(\cdot,\alpha_{v})\Vert_{E^{\vee},v}\Vert\alpha_{v}\Vert_{E,v}$ and we see that \[\left(2m_{v}\left(\Vert x-b(x,\alpha_{v})\alpha_{v}+\EuScript{X}_{v}^{\prime}\alpha_{v}\Vert_{E,v},\Vert\EuScript{Y}_{v}^{\prime}\alpha_{v}\Vert_{E,v}\right)/\Vert b(\cdot,\alpha_{v})\Vert_{E^{\vee},v}\Vert\alpha_{v}\Vert_{E,v}\right)^{2}\] is bounded by  
 \[\left(\left(t_{v}+\frac{1}{t_{v}}\right)\Vert x\Vert_{E,v}+\left(t_{v}-\frac{1}{t_{v}}\right)\Vert y\alpha_{v}\Vert_{E,v}\right)^{2}+\left(\left(t_{v}-\frac{1}{t_{v}}\right)\Vert x\Vert_{E,v}+\left(t_{v}+\frac{1}{t_{v}}\right)\Vert y\alpha_{v}\Vert_{E,v}\right)^{2}.\]We develop this expression and substitute the product $2\Vert x\Vert_{E,v}\Vert y\alpha_{v}\Vert_{E,v}$ by $\Vert x\Vert_{E,v}^{2}+\Vert y\alpha_{v}\Vert_{E,v}^{2}$ to finally obtain the desired bound $4t_{v}^{2}m_{v}\left(\Vert x\Vert_{E,v},\Vert y\alpha_{v}\Vert_{E,v}\right)^{2}$.
 \end{proof}
 When $y=0$ we can prove a better estimate, which does not depend on $t_{v}$.
 \begin{lemma}\label{lemma33}
 For all $v\in V(K)$ and $x\in E\otimes_{K}K_{v}$, we have $\Vert x\Vert_{E,v}\le2^{\epsilon_{v}/2}\Vert(x,0)\Vert_{E_{t},v}$.
 \end{lemma}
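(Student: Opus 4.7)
The plan is to write down $\xi_v(x,0)$ explicitly and then recover $\|x\|_{E,v}$ from the two coordinates of $\xi_v(x,0)$ via the appropriate form of the triangle inequality, keeping track of a single coefficient comparison. Setting $\lambda=b(x,\alpha_v)$ in the definitions of $\EuScript{X}_v,\EuScript{Y}_v$ with $y=0$, one finds $\xi_v(x,0)=(x+a_v\lambda\alpha_v,\,c_v\lambda)$ with $a_v=\tfrac{1}{2}(t_v^{-1}+t_v-2)$ and $c_v=\tfrac{1}{2}(t_v^{-1}-t_v)$, so that
\[
\|(x,0)\|_{E_t,v}=m_v\bigl(\|x+a_v\lambda\alpha_v\|_{E,v},\,|c_v|_v\,|\lambda|_v\,\|\alpha_v\|_{E,v}\bigr).
\]

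For $v\notin V$, the twisted norm agrees with the product norm on pairs of the form $(x,0)$ and the estimate is immediate with constant $1$. For $v\in V$, the decomposition $x=(x+a_v\lambda\alpha_v)-a_v\lambda\alpha_v$ together with the triangle inequality at $v$ reduces the claim to comparing $|a_v|_v$ with $|c_v|_v$. In the ultrametric case, the strict hypothesis $|t_v|_v>1$ forces $|t_v\pm 1|_v=|t_v|_v$ by ultrametric cancellation, and substituting into $a_v=-(1-t_v)^2/(2t_v)$ and $c_v=-(t_v-1)(t_v+1)/(2t_v)$ yields $|a_v|_v=|c_v|_v$; the ultrametric inequality then gives $\|x\|_{E,v}\le\|(x,0)\|_{E_t,v}$. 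In the archimedean case, the same factorizations over $\mathbb{R}$ show $|a_v|\le|c_v|$ whenever $t_v\ge 1$, after which the Euclidean triangle inequality combined with $(u+w)^2\le 2(u^2+w^2)$ and the definition $m_v(u,w)=\sqrt{u^2+w^2}$ delivers the required factor $\sqrt{2}$.

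The only point requiring care is the coefficient comparison $|a_v|_v\le|c_v|_v$ for $v\in V$: it is an elementary real inequality at archimedean places, while at non-archimedean ones it is exactly the place where the strict inequality $|t_v|_v>1$ (rather than $\ge 1$) is used to equate $|t_v\pm 1|_v$ with $|t_v|_v$.
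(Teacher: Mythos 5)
Your proof is correct and follows essentially the same route as the paper: both identify the same decomposition $\xi_v(x,0)=(x+\theta,\,\EuScript{Y}_v)$ with $\theta=a_v b(x,\alpha_v)\alpha_v$, observe $|a_v|_v\le|c_v|_v$ (equality in the ultrametric case via $|t_v\pm1|_v=|t_v|_v$), and then combine a triangle inequality with the elementary two--dimensional inequality relating $u+w$ and $\sqrt{u^2+w^2}$; the paper phrases it as $(\Vert x\Vert-\Vert\theta\Vert)^2+\Vert\theta\Vert^2\ge\Vert x\Vert^2/2$ via the reverse triangle inequality, which is algebraically the same as your $(u+w)^2\le2(u^2+w^2)$. (Minor slip: $a_v=\tfrac12(t_v+t_v^{-1}-2)=+(t_v-1)^2/(2t_v)$, not its negative, but this is harmless since only $|a_v|_v$ is used.)
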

 \begin{proof}If $v\in V(K)\setminus V$ then $\Vert(x,0)\Vert_{E_{t},v}=\Vert(x,0)\Vert_{E\times K,v}=\Vert x\Vert_{E,v}$ and the result is clear.  Let $v\in V$ be an archimedean place and $x\in E\otimes_{K}K_{v}$. From the definition of the $v$-norm of $E_{t}$, the number $\Vert(x,0)\Vert_{E_{t},v}^{2}$ equals \[\Big\Vert x-b(x,\alpha_{v})\alpha_{v}+\frac{1}{2}\left(t_{v}+\frac{1}{t_{v}}\right)b(x,\alpha_{v})\alpha_{v}\Big\Vert_{E,v}^{2}+\Big\vert\frac{1}{2}\Big(t_{v}-\frac{1}{t_{v}}\Big)b(x,\alpha_{v})\Big\vert_{v}^{2}\Vert\alpha_{v}\Vert_{E,v}^{2}.\]Put $\theta=\frac{1}{2}\left(t_{v}+\frac{1}{t_{v}}-2\right)b(x,\alpha_{v})\alpha_{v}$ and bound from below the first norm by $\vert\Vert x\Vert_{E,v}-\Vert\theta\Vert_{E,v}\vert$ (reverse triangle inequality). Also note that $t_{v}-1/t_{v}\ge t_{v}+1/t_{v}-2\ge 0$ since $t_{v}$ is a real number greater than $1$. In particular the norm of $(t_{v}-1/t_{v})b(x,\alpha_{v})\alpha_{v}/2$ is greater or equal than $\Vert\theta\Vert_{E,v}$. We conclude with \[\Vert(x,0)\Vert_{E_{t},v}^{2}\ge \left(\Vert x\Vert_{E,v}-\Vert\theta\Vert_{E,v}\right)^{2}+\Vert\theta\Vert_{E,v}^{2}\ge\frac{\Vert x\Vert_{E,v}^{2}}{2}.\]When $v\in V$ is ultrametric, the norm $\Vert(x,0)\Vert_{E_{t},v}$ is  \[\max{\left(\Big\Vert x-b(x,\alpha_{v})\alpha_{v}+\frac{1}{2}\Big(t_{v}+\frac{1}{t_{v}}\Big)b(x,\alpha_{v})\alpha_{v}\Big\Vert_{E,v},\Big\Vert\frac{1}{2}\Big(t_{v}-\frac{1}{t_{v}}\Big)b(x,\alpha_{v})\alpha_{v}\Big\Vert_{E,v}\right)}.\]Since $\vert t_{v}\vert_{v}>1$ we have $\vert t_{v}-1/t_{v}\vert_{v}=\vert t_{v}\vert_{v}=\vert t_{v}+1/t_{v}-2\vert_{v}$ so that \[\Vert(x,0)\Vert_{E_{t},v}=\max{\left(\Vert x+\theta\Vert_{E,v},\Vert\theta\Vert_{E,v}\right)}\ge\Vert x\Vert_{E,v}.\]\end{proof}
 At last, we also need the following statement.
  \begin{lemma}\label{lemma34} The height of $Q$ satisfies $H(Q)\le H(1,q)$.  
 \end{lemma}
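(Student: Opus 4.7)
The plan is to bound the local norm $\Vert Q\Vert_v$ by $\max(1,\Vert q\Vert_v)$ at every place $v\in V(K)$, and then take modules to conclude. The symmetric bilinear form attached to $Q$ on $E\times K$ is
\[
B_Q\bigl((x,y),(x',y')\bigr)=b(x,x')-yy',
\]
so what must be controlled is the ratio $\vert b(x,x')-yy'\vert_v/(\Vert(x,y)\Vert_{E\times K,v}\Vert(x',y')\Vert_{E\times K,v})$.

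First I would treat the ultrametric case. There the norm on $E\times K$ is the max, so the strong triangle inequality gives
\[
\vert b(x,x')-yy'\vert_v\le\max\bigl(\Vert q\Vert_v\Vert x\Vert_{E,v}\Vert x'\Vert_{E,v},\,\vert y\vert_v\vert y'\vert_v\bigr),
\]
which is trivially at most $\max(1,\Vert q\Vert_v)\Vert(x,y)\Vert_{E\times K,v}\Vert(x',y')\Vert_{E\times K,v}$. This yields the inequality $\Vert Q\Vert_v\le\max(1,\Vert q\Vert_v)$ at every ultrametric place.

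Next I would treat archimedean $v$, where the norm on $E\times K$ is Euclidean: $\Vert(x,y)\Vert_{E\times K,v}^2=\Vert x\Vert_{E,v}^2+\vert y\vert_v^2$. Using the triangle inequality, the definition of $\Vert q\Vert_v$, and a one-line Cauchy--Schwarz for the pair of vectors $(\Vert x\Vert_{E,v},\vert y\vert_v)$ and $(\Vert x'\Vert_{E,v},\vert y'\vert_v)$ in $\mathbb{R}^2$,
\[
\vert b(x,x')-yy'\vert_v\le\Vert q\Vert_v\Vert x\Vert_{E,v}\Vert x'\Vert_{E,v}+\vert y\vert_v\vert y'\vert_v\le\max(1,\Vert q\Vert_v)\,\Vert(x,y)\Vert_{E\times K,v}\Vert(x',y')\Vert_{E\times K,v},
\]
which again gives $\Vert Q\Vert_v\le\max(1,\Vert q\Vert_v)$.

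Finally I would invoke the monotonicity of the module: since $\Vert Q\Vert_v\le\max(1,\Vert q\Vert_v)$ for every $v\in V(K)$ and the function $v\mapsto\max(1,\Vert q\Vert_v)$ is integrable (it differs from $1$ only on a compact set of places, because the adelic matrix presenting $E$ and the bilinear form $b$ do so), the module of $v\mapsto\Vert Q\Vert_v$ is bounded by that of $v\mapsto\max(1,\Vert q\Vert_v)$, i.e.\ $H(Q)\le H(1,q)$. I do not expect a real obstacle: the only subtlety is the archimedean bound, where it is important to bound the two terms $b(x,x')$ and $yy'$ separately and then group by Cauchy--Schwarz rather than attempt a single-step estimate, so as not to lose the clean factor $\max(1,\Vert q\Vert_v)$.
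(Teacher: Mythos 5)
There is a genuine gap: you bound $\lVert Q\rVert_v$ only with respect to the \emph{product} norm $\lVert\cdot\rVert_{E\times K,v}$, but the height $H(Q)$ in this lemma is taken relative to the twisted rigid adelic space $E_t$ introduced just before the lemma, where for each $v\in V$ the norm is
\[
\lVert(x,y)\rVert_{E_t,v}=m_v\bigl(\lVert x-b(x,\alpha_v)\alpha_v+\EuScript{X}_v\alpha_v\rVert_{E,v},\lVert\EuScript{Y}_v\alpha_v\rVert_{E,v}\bigr),
\]
which is \emph{not} $\lVert(x,y)\rVert_{E\times K,v}$. The equality $\lVert\cdot\rVert_{E_t,v}=\lVert\cdot\rVert_{E\times K,v}$ holds only at $v\notin V$, which is exactly where your argument is valid; at $v\in V$ your estimate says nothing about the ratio $\lvert B_Q\rvert_v / (\lVert\cdot\rVert_{E_t,v}\lVert\cdot\rVert_{E_t,v})$. (This also matters downstream: the quadratic Siegel's lemma in the proof of Theorem~\ref{thm3} is applied to the quadratic space $(E_t,Q)$, so it is the $E_t$-height that has to be bounded.)

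To close the gap one needs, at each $v\in V$, the extra steps the paper carries out. First rewrite the $y$-term using $q(\alpha_v)=1$, so $yy'=b(y\alpha_v,y'\alpha_v)$ and hence $B\bigl((x,y),(x',y')\bigr)=b(x,x')-b(y\alpha_v,y'\alpha_v)$; this lets you factor out $\lVert q\rVert_v$ and bound $\lvert B\rvert_v$ by $\lVert q\rVert_v\,m_v(\lVert x\rVert_{E,v},\lVert y\alpha_v\rVert_{E,v})\,m_v(\lVert x'\rVert_{E,v},\lVert y'\alpha_v\rVert_{E,v})$. Second, use that $\xi_v$ is a $Q$-isometry (Lemma~\ref{lem2}): replacing $(x,y)$ and $(x',y')$ by $\xi_v(x,y)$ and $\xi_v(x',y')$ turns the right-hand side into $\lVert q\rVert_v\lVert(x,y)\rVert_{E_t,v}\lVert(x',y')\rVert_{E_t,v}$ while leaving the left-hand side unchanged, giving $\lVert Q\rVert_v\le\lVert q\rVert_v\le\max(1,\lVert q\rVert_v)$ at $v\in V$ as well. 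Without the rescaling of the $y$-coordinate by $\alpha_v$ and the invariance under $\xi_v$, the bound at the twisted places does not follow.
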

  \begin{proof} Let $B$ be the bilinear form associated to $Q$ and $v\in V(K)$. Let $x,x^{\prime}\in E_{v}$ and $y,y^{\prime}\in K_{v}$. From the expression $B((x,y),(x^{\prime},y^{\prime}))=b(x,x^{\prime})-yy^{\prime}$, we get \[\vert B((x,y),(x^{\prime},y^{\prime}))\vert_{v}\le\vert b(x,x^{\prime})\vert_{v}+\vert y\vert_{v}\vert y^{\prime}\vert_{v}\le\Vert q\Vert_{v}\Vert x\Vert_{E,v}\Vert x^{\prime}\Vert_{E,v}+\vert y\vert_{v}\vert y^{\prime}\vert_{v}\](the sum can be replaced by a maximum when $v$ is ultrametric). We factorize by $\max{(1,\Vert q\Vert_{v})}$ and we use the Cauchy inequality to obtain \[\vert B((x,y),(x^{\prime},y^{\prime}))\vert_{v}\le\max{(1,\Vert q\Vert_{v})}\Vert(x,y)\Vert_{E\times K,v}\Vert(x^{\prime},y^{\prime})\Vert_{E\times K,v},\]which implies $\Vert Q\Vert_{v}\le\max{(1,\Vert q\Vert_{v})}$ when $v\not\in V$ since, in this case, $\Vert\cdot\Vert_{E_{t},v}=\Vert\cdot\Vert_{E\times K,v}$. When $v\in V$, we observe that $B((x,y),(x^{\prime},y^{\prime}))=b(x,x^{\prime})-b(y\alpha_{v},y^{\prime}\alpha_{v})$ and so \[\vert B((x,y),(x^{\prime},y^{\prime}))\vert_{v}\le\Vert q\Vert_{v}m_{v}(\Vert x\Vert_{E,v},\Vert y\alpha_{v}\Vert_{E,v})m_{v}(\Vert x^{\prime}\Vert_{E,v},\Vert y^{\prime}\alpha_{v}\Vert_{E,v}).\]Now, since $B$ is invariant by $\xi_{v}$ (Lemma~\ref{lem2}), if we replace $(x,y)$ and $(x^{\prime},y^{\prime})$ by their images by $\xi_{v}$, we deduce $\vert B((x,y),(x^{\prime},y^{\prime}))\vert_{v}\le\Vert q\Vert_{v}\Vert(x,y)\Vert_{E_{t},v}\Vert(x^{\prime},y^{\prime})\Vert_{E_{t},v}$ then $\Vert Q\Vert_{v}\le\Vert q\Vert_{v}$. Thus, in all cases we have  $\Vert Q\Vert_{v}\le\max{(1,\Vert q\Vert_{v})}$ which leads to $H(Q)\le H(1,q)$.
 \end{proof}
 
 \subsection{Proof of Theorem~\ref{thm3}}\label{sec43} According to~\cite[Corollary~$3.2$]{fq}, there exists a maximal $Q$-isotropic subspace $\{0\}\ne F\subset E_{t}$ (of dimension $i(Q)$) such that $(1+\varepsilon)^{-1/2n}\le 2H(Q)\Lambda_{1}(E_{t}/F)^{2}$. Bounding from above $\Lambda_{1}(E_{t}/F)$ by $\left(c_{K}^{\Lambda}(n+1-i(Q))H(E_{t}/F)\right)^{1/(n+1-i(Q))}$ and using $H(E_{t}/F)=H(E_{t})/H(F)=\vert\alpha\vert H(E)/H(F)$, we deduce the upper bound 
\[H(F)\le(1+\varepsilon)^{1/2}c_{K}^{\Lambda}(n+1-i(Q))\left(2H(Q)\right)^{(n+1-i(Q))/2}\vert\alpha\vert H(E)\]which leads to $H(F)\le(1+\varepsilon)^{1/2}\EuScript{T}_{0}/c_{K}^{\mathrm{BV}}(i(Q))$ with Lemma~\ref{lemma34}. Since $i(Q)>i(q)$, we have $F\not\subset E\times\{0\}$. To build the vector $(\upupsilon,\upphi)$ of Theorem~\ref{thm3}, we distinguish two cases. \par $(i)$ If $\lambda_{1}^{\mathrm{BV}}(F)<\lambda_{1}^{\mathrm{BV}}(E)/\sqrt{2}$, we consider $0<\varepsilon^{\prime}\le\varepsilon$ such that $(1+\varepsilon^{\prime})^{1/2}\lambda_{1}^{\mathrm{BV}}(F)<\lambda_{1}^{\mathrm{BV}}(E)/\sqrt{2}$ and $(\upupsilon,\upphi)\in F\setminus\{0\}$ such that $\Vert(\upupsilon,\upphi)\Vert_{E_{t},v}\le\left((1+\varepsilon^{\prime})^{1/2}\lambda_{1}^{\mathrm{BV}}(F)\right)^{\epsilon_{v}}$ for all $v\in V(K)$. By Lemma~\ref{lemma33} and the choice of $\varepsilon^{\prime}$, we have $\upphi\ne 0$. Moreover $\lambda_{1}^{\mathrm{BV}}(F)\le\left(c_{K}^{\mathrm{BV}}(i(Q))H(F)\right)^{1/i(Q)}$, so that $\Vert(\upupsilon,\upphi)\Vert_{E_{t},v}\le\left((1+\varepsilon)\EuScript{T}\right)^{\epsilon_{v}}$ for all $v\in V(K)$.
  \par $(ii)$ If $\lambda_{1}^{\mathrm{BV}}(F)\ge\lambda_{1}^{\mathrm{BV}}(E)/\sqrt{2}$, we consider $(\upupsilon,\upphi)\in F$ such that $\upphi\ne 0$ and $\Vert(\upupsilon,\upphi)\Vert_{E_{t},v}\le\left((1+\varepsilon)^{1/2}\lambda_{i(Q)}^{\mathrm{BV}}(F)\right)^{\epsilon_{v}}$ for all $v\in V(K)$. We can do that since $F\not\subset E\times\{0\}$ and every basis of $F$ contains a vector whose last coordinate is non-zero. We bound \[\lambda_{i(Q)}^{\mathrm{BV}}(F)\le\frac{c_{K}^{\mathrm{BV}}(i(Q))H(F)}{\lambda_{1}^{\mathrm{BV}}(F)^{i(Q)-1}}\le(1+\varepsilon)^{1/2}\EuScript{T}.\]Thus, in both cases, there exists $(\upupsilon,\upphi)\in E\times K$ such that $\upphi\ne 0$, $q(\upupsilon)=\upphi^{2}$ (because $(\upupsilon,\upphi)\in F$ is $Q$-isotropic) and $\Vert(\upupsilon,\upphi)\Vert_{E_{t},v}\le\left((1+\varepsilon)\EuScript{T}\right)^{\epsilon_{v}}$ for all $v\in V(K)$.  These inequalities yield the first assertion of Theorem~\ref{thm3}, since when $v\not\in V$, we have $\Vert(\upupsilon,\upphi)\Vert_{E_{t},v}=\Vert(\upupsilon,\upphi)\Vert_{E\times K,v}=m_{v}\big(\Vert\upupsilon\Vert_{E,v},\vert\upphi\vert_{v}\big)$. Now, let us consider $v\in V$. The second assertion of Theorem~\ref{thm3} is a direct consequence of Lemma~\ref{lemma32} and the definition of  $T_{v}$. At last, for $(3)$, note that $q(\alpha_{v}\upphi-\upupsilon)=2(\EuScript{Y}_{v}-\EuScript{X}_{v})\upphi/t_{v}$. From \[\vert\EuScript{X}_{v}\vert_{v}=\vert b\left(\upupsilon-b(\upupsilon,\alpha_{v})\alpha_{v}+\EuScript{X}_{v}\alpha_{v},\alpha_{v}\right)\vert_{v}\le\Vert b(\cdot,\alpha_{v})\Vert_{E^{\vee},v}\Vert\upupsilon-b(\upupsilon,\alpha_{v})\alpha_{v}+\EuScript{X}_{v}\alpha_{v}\Vert_{E,v}\]and $\vert\EuScript{Y}_{v}\vert_{v}=\vert b(\EuScript{Y}_{v}\alpha_{v},\alpha_{v})\vert_{v}\le\Vert b(\cdot,\alpha_{v})\Vert_{E^{\vee},v}\Vert\EuScript{Y}_{v}\alpha_{v}\Vert_{E,v}$ we deduce
\[\vert\EuScript{Y}_{v}-\EuScript{X}_{v}\vert_{v}\le(\sqrt{2})^{\epsilon_{v}}\Vert b(\cdot,\alpha_{v})\Vert_{E^{\vee},v}\Vert(\upupsilon,\upphi)\Vert_{E_{t},v}\]and so \[\left\vert q\left(\alpha_{v}\upphi-\upupsilon\right)\right\vert_{v}\le\left\vert\frac{2\upphi}{t_{v}}\right\vert_{v}\Vert b(\cdot,\alpha_{v})\Vert_{E^{\vee},v}\left((1+\varepsilon)\sqrt{2}\EuScript{T}\right)^{\epsilon_{v}}.\]We conclude with the formula linking $t_{v}$ and $T_{v}$.
\qed
\par\vspace{0.2cm}
 Theorem~\ref{thm2} can be deduced from Theorem~\ref{thm3}: Choose $K=\mathbb{Q}$ and the singleton $V=\{\infty\}$ (archimedean place of $\mathbb{Q}$). Take $E=\mathbb{Q}^{n}$ with the norms $\vert\cdot\vert_{p}$ at $p\in V(\mathbb{Q})\setminus\{\infty\}$ and $\Vert x\Vert_{E,\infty}=\sqrt{q_{0}(x)}$. In other words $E$ corresponds to the Euclidean lattice $(\mathbb{Z}^{n},\sqrt{q_{0}})$ and, in particular, we have $H(E)=\sqrt{\det q_{0}}$ and $\lambda_{1}^{\mathrm{BV}}(E)=\lambda_{1}$. The integrality hypothesis on the coefficients of $A(q)$ gives $\Vert q\Vert_{p}\le 1$ for all $p\ne\infty$ so that $H(1,q)= \max{(1,\Vert q\Vert_{\infty})}$.  Also choose $t_{\infty}=T/\EuScript{T}$. The equality $i(Q)=i(q)+1$ as well as the bound $c_{\mathbb{Q}}^{\mathrm{BV}}(i(q)+1)c_{\mathbb{Q}}^{\Lambda}(n-i(q))\le n^{n}$ (see \S~\ref{sec46}) allow to conclude. Theorem~\ref{thm1} also follows from Theorem~\ref{thm3} in the same way by further choosing $q_{0}=q$. This implies $\vert\alpha\vert=\Vert\alpha\Vert_{E,\infty}=\Vert b(\cdot,\alpha)\Vert_{E^{\vee},\infty}=\Vert q\Vert_{\infty}=H(1,q)=1$. Besides, since $i(Q)=1$, we have both $\EuScript{T}=\EuScript{T}_{0}$ and $c_{\mathbb{Q}}^{\mathrm{BV}}(i(Q))c_{\mathbb{Q}}^{\Lambda}(n+1-i(Q))=c_{\mathbb{Q}}^{\Lambda}(n)=\gamma_{n}^{n/2}$.

\subsection{Proof of Theorem~\ref{thm4}}The proof of Theorem~\ref{thm3} still works the same way when the maximal $Q$-isotropic subspace $F$ introduced at the beginning of the proof satisfies $F\not\subset E\times\{0\}$. Thus, in this case, Theorem~\ref{thm4} is proved. So now we can assume that $F\subset E\times\{0\}$. We shall consider the image of $F$ by a certain $Q$-isometry, image not contained in $E\times\{0\}$, with which we shall apply the same method as Theorem~\ref{thm3}. More precisely,  we claim that there exist $a\in E_{t}/F$ and a maximal $Q$-isotropic subspace $F_{a}\subset E_{t}$ satisfying the following three conditions: $(i)$ $F_{a}\not\subset E\times\{0\}$, $(ii)$ $H_{E_{t}/F}(a)\le 2(1+\varepsilon)^{1/4n}\lambda_{n+1-i(Q)}^{\mathrm{BV}}(E_{t}/F)$ and $(iii)$ $H(F_{a})\le 2H(Q)H_{E_{t}/F}(a)^{2}H(F)$. Indeed, the space $F_{a}$ comes from the key lemma of~\cite[\S~3]{fq}, whereas the element $a\in E_{t}/F$ is chosen in the same way as the beginning of the proof of \cite[Theorem 7.1]{fq} (page 234 with, here, $Z(I)=E\times\{0\}$). Besides, the minimum $\lambda_{n+1-i(Q)}^{\mathrm{BV}}(E_{t}/F)$ can be bounded in two different ways: either by $c_{1}(K)\Lambda_{n+1-i(Q)}(E_{t}/F)$ \cite[Proposition 4.8]{cds} and then by $c_{1}(K)c_{K}^{\Lambda}(n+1-i(Q))H(E_{t}/F)/\Lambda_{1}(E_{t}/F)^{n-i(Q)}$ or, directly, by $c_{K}^{\mathrm{BV}}(n+1-i(Q))H(E_{t}/F)/\lambda_{1}^{\mathrm{BV}}(E_{t}/F)^{n-i(Q)}$ and then by $c_{K}^{\mathrm{BV}}(n+1-i(Q))H(E_{t}/F)/\Lambda_{1}(E_{t}/F)^{n-i(Q)}$ since $\lambda_{1}^{\mathrm{BV}}(E_{t}/F)\ge\Lambda_{1}(E_{t}/F)$. In both cases, we bound from below $\Lambda_{1}(E_{t}/F)$ by $(1+\varepsilon)^{-1/4n}\left(2H(Q)\right)^{-1/2}$ (definition of $F$). Thus (with $i(Q)\ge 1$) $\lambda_{n+1-i(Q)}^{\mathrm{BV}}(E_{t}/F)$ is smaller than \[(1+\varepsilon)^{(n-1)/4n}\min{\left(c_{1}(K)c_{K}^{\Lambda}(n+1-i(Q)),c_{K}^{\mathrm{BV}}(n+1-i(Q))\right)}\left(2H(Q)\right)^{(n-i(Q))/2}\frac{H(E_{t})}{H(F)}.\]This information put in the previous estimate of $H(F_{a})$ implies \begin{equation*}\begin{split}H(F_{a})\le&\,(1+\varepsilon)^{1/2}\times 4\min{\left(c_{1}(K)c_{K}^{\Lambda}(n+1-i(Q)),c_{K}^{\mathrm{BV}}(n+1-i(Q))\right)}^{2}\\ & \times\left(2H(Q)\right)^{n+1-i(Q)}\left(\vert\alpha\vert H(E)\right)^{2}/H(F).\end{split}\end{equation*}We have $H(F)\ge \lambda_{1}^{\mathrm{BV}}(F)^{i(Q)}/c_{K}^{\mathrm{BV}}(i(Q))$ and,  since $F\subset E\times\{0\}$, we also have $\lambda_{1}^{\mathrm{BV}}(F)\ge\lambda_{1}^{\mathrm{BV}}(E\times\{0\})$ so $\lambda_{1}^{\mathrm{BV}}(F)\ge\lambda_{1}^{\mathrm{BV}}(E)/\sqrt{2}$ by Lemma~\ref{lemma33}. Reporting these estimates in the previous bound for $H(F_{a})$, we obtain $H(F_{a})\le(1+\varepsilon)^{1/2}\EuScript{T}_{1}/c_{K}^{\mathrm{BV}}(i(Q))$ with Lemma~\ref{lemma34}. It is then enough to resume the demonstration of Theorem~\ref{thm3} by replacing $F$ by $F_{a}$ (and so $\EuScript{T}_{0}$ by $\EuScript{T}_{1}$) to conclude. \qed

\subsection{}\label{sec46}The constant $c_{K}^{\mathrm{BV}}(i(Q))c_{K}^{\Lambda}(n+1-i(Q))$ in $\EuScript{T}_{0}$ is finite (only) when $K$ is Siegel field with $c_{1}(K)<+\infty$. This happens e.g. when $K$ is number field or $[\overline{K}:K]\le 2$ or, also, when $K=\cup_{n}{K_{n}}$ is the union of a tower of number fields $(K_{n})_{n\in\mathbb{N}}$ of bounded root discriminants~\cite[Lemma 5.8]{cds}. Besides, the following estimates, valid for all $n\ge 1$ and $i\in\{0,\ldots,n-1\}$, may be of interest:
\begin{enumerate}\item when $K$ is a number field of root discriminant $\delta_{K/\mathbb{Q}}$, we have \[c_{K}^{\mathrm{BV}}(i+1)c_{K}^{\Lambda}(n-i)\le n^{n/2}\delta_{K/\mathbb{Q}}^{(n+1)/2},\]
\item when $K=\overline{\mathbb{Q}}$, we have \[c_{\overline{\mathbb{Q}}}^{\mathrm{BV}}(i+1)c_{\overline{\mathbb{Q}}}^{\Lambda}(n-i)\le c_{\overline{\mathbb{Q}}}^{\Lambda}(n)\le n^{n/2}.\] 
\end{enumerate}
The first one derives from $c_{K}^{\mathrm{BV}}(n)\le (n\delta_{K/\mathbb{Q}})^{n/2}$ and the bound $(i+1)^{i+1}(n-i)^{n-i}\le n^{n}$ (the function $a\mapsto (a+i)\log(a+i)-a\log a$ is increasing). The second bound is a consequence of the formula given for $c_{\overline{\mathbb{Q}}}^{*}(n)$ coupled with the estimate $aH_{a}+bH_{b}\le(a+b-1)H_{a+b-1}+1$ satisfied by the harmonic number $H_{a}=1+1/2+\cdots+1/a$ for all positive integers $a,b$ and  proven by induction on $b$.

\vskip10mm\footnotesize
 Universit{\'e} Clermont Auvergne, CNRS, LMBP, F-63000 Clermont-Ferrand France
\quad\texttt{Eric.Gaudron@uca.fr}

\end{document}